\newtheorem{theorem}{Theorem}[section]
\newtheorem{lemma}[theorem]{Lemma}
\newtheorem{assertion}[theorem]{Assertion}
\newtheorem{definition}[theorem]{Definition}
\newtheorem{proposition}[theorem]{Proposition}
\newtheorem{corollary}[theorem]{Corollary}
\theoremstyle{remark}
\newtheorem{remark}[theorem]{Remark}
\numberwithin{equation}{section}
\begin{document}
\title[Abel averages and holomorphically pseudo-contractive maps]{Abel
averages and holomorphically pseudo-contractive maps in Banach
spaces}
\author[F. Bracci]{Filippo Bracci$^\dag$}
\address{F. Bracci: Dipartimento Di Matematica\\
Universit\`{a} di Roma \textquotedblleft Tor Vergata\textquotedblright\ \\
Via Della Ricerca Scientifica 1, 00133 \\
Roma, Italy}
\email{fbracci@mat.uniroma2.it}
\author[Y. Kozitsky]{Yuri Kozitsky$^\ddag$}
\address{Y. Kozitsky: Instytut Matematyki, Uniwersytet Marii Curie-Sk{\l}odowskiej, Plac Marii Curie-Sk{\l}odowskiej 1, 20-031 Lublin, Poland}
\email{jkozi@hektor.umcs.lublin.pl}

\author[D. Shoikhet]{David Shoikhet$^{\ddag}$}

\address{D. Shoikhet: Department of Mathematics, ORT Braude College, 21982 Karmiel, Israel}
\email{davs@braude.ac.il}

\thanks{$\dag$ Supported by the ERC grant ``HEVO - Holomorphic Evolution Equations'' n. 277691.}
\thanks{$\ddag$ Supported by the European Commission under the project
STREVCOMS PIRSES-2013-612669.}
\begin{abstract}
A class of maps in a complex Banach space is studied, which includes
both unbounded linear operators and nonlinear holomorphic maps. The
defining property, which we call {\sl pseudo-contractivity}, is
introduced by means of the Abel averages of such maps. We show that
the studied maps are dissipative in the spirit of the classical
Lumer-Phillips theorem. For pseudo-contractive holomorphic maps, we
establish the power convergence of the Abel averages to holomorphic
retractions.
\end{abstract}

\maketitle

\section{Introduction and the results}\label{S1}

\subsection{Preliminary and notations}
In this paper,  $(X,\|\cdot\|)$ will stand for a complex Banach
space. The best studied class of self-maps of $X$ is that of bounded
linear operators $T:X \to X$, which we denote by $\mathcal{B}(X)$.
For $T\in \mathcal{B}(X)$, the \textit{Abel average} is defined as
\begin{equation}  \label{i1}
A_\alpha = (1-\alpha) \sum_{n=1}^\infty \alpha^n T^n,
\end{equation}
where $\alpha \in (0, 1)$ is such that the series in (\ref{i1})
converges in the operator norm topology. The study of Abel averages
of bounded linear operators goes back to, at least, E. Hille
\cite{Hil} and W. F. Eberlein \cite{Eber}. Two natural extensions of
$\mathcal{B}(X)$ are the class of unbounded linear operators
$T:\mathcal{D}(T)\subset X \to X$, and the class of nonlinear
holomorphic maps $h:V\to X$ with various choices of the domains $V$.
Nonlinear holomorphic maps play an important role in complex
analysis and in theory of   dynamical systems, see, {\sl e.g.},
\cite{BES,H,H1,HRS,ReichS,Ves}. Note that, for infinite dimensional
Banach spaces, these two classes are disjoint. Separately they are
studied extensively. However, in many applications one encounters
maps that are neither linear nor continuous. A typical example is
the map $T + g$ where $T$ is a linear unbounded operator and
$g$ is nonlinear and holomorphic. Such maps appear, in particular,
in evolution equations of reaction-diffusion type, see, {\sl e.g.},
\cite{B} and subsection \ref{Essec} below.

The aim of the present paper is:
\begin{enumerate}
\item to introduce a class of maps -- {\sl  $\omega$-pseudo-contractive maps}
-- which includes both unbounded  linear operators
and nonlinear holomorphic maps, and to study this class by means of
properly defined Abel averages;
\item to study, cf. \cite{BES,H1,HRS}, the Abel averages of nonlinear holomorphic
maps in the spirit of \cite{KSZ}, where a number of properties of
the Abel averages of unbounded linear operators were obtained.
\end{enumerate}

For a linear operator $T$, we write $\mathcal{D}(T)$, $\mathrm{Ker}
(T)$, and $\mathrm{Im} (T)$ for denoting the domain, kernel, and
range of $T$, respectively. By $I$ we denote the identity operator
$Ix = x$, $x\in X$. Given $r>0$ and $x\in X$, we set $B_r(x) =
\{y\in X: \|y-x\|<r\}$. For brevity, we denote $B_r(0)$ by $B_r$,
and $B_1(0)$ by $B$. An open and connected subset $V\subset X$ is
called a domain. A map $h:V \to W\subseteq X$ is called holomorphic
if it admits the Fr{\'e}chet derivative $h'(x)$  at each $x\in V$.
For $W\subseteq X$, by $\mathrm{Hol}(V, W)$ we denote the set of all
holomorphic maps from a domain $V$ to $W$, and write
$\mathrm{Hol}(V)$ for $\mathrm{Hol} (V,V)$.

For $T\in \mathcal{B}(X)$, uniform ergodic theorems for Abel and
Ces{\'a}ro averages were established by M. Lin \cite{Lin0, Lin}.
 The following assertion can be
deduced from the corresponding classical results of \cite{Lin0,Lin},
see \cite[Assertion 1.3]{KSZ}.
\begin{assertion}
\label{ias1}
 Let $T\in \mathcal{B}(X)$ be  such that $\|T^n/n\|\to 0$ as $n\to +\infty$.
Then, for each $\alpha \in (0,1)$, the series in (\ref{i1})
converges in the norm topology. Moreover, the following statements
are equivalent: \vskip.1cm
\begin{itemize}
\item[{\it (i)}] $\mathrm{Im}(I-T)$ is a closed subset of $X$.
\item[{\it (ii)}]   For some $\alpha \in (0,1)$, the sequence
$\{A^n_\alpha\}_{n \in
\mathbb{N}}$ converges in the operator norm topology as $n\to +\infty$.
\item[{\it (iii)}] For each $\alpha \in (0,1)$, the sequence
$\{A^n_\alpha\}_{n \in
\mathbb{N}}$ converges in the operator norm topology as $n\to +\infty$.
\end{itemize}
\vskip.1cm \noindent The limit in (ii) and (iii) is given by the
projection of $X$ onto $\mathrm{Ker}(I-T)$ along $\mathrm{Im}(I-T)$.
\end{assertion}
For an unbounded operator $T:\mathcal{D}(T)\subset X \to X$, the Abel average
is defined as
\begin{equation}  \label{i3}
A_\alpha = (1-\alpha) [I- \alpha T]^{-1},
\end{equation}
where $\alpha\in(0,1)$ is such that $A_\alpha\in \mathcal{B}(X)$. A
further extension of this kind consists in defining Abel averages
for nonlinear holomorphic maps in $X$. The Abel averages of these maps
are also nonlinear holomorphic maps. They provide effective tools
for studying nonlinear holomorphic maps, in particular nonlinear
holomorphic semigroups and their fixed point sets \cite{ReichS}.

Of course, the condition $\Vert T^{n}/n\Vert \rightarrow 0$ in
Assertion \ref{ias1} is not applicable to unbounded operators.
Moreover, the condition is evidently far from necessary for the
corresponding convergence to hold; it can be replaced by, {\sl e.g.}, the
dissipativity condition used in the classical Lumer-Phillips
theorem, see \cite[p. 30]{Bonsal}. It turns out that the only
essential property  of $T$ required to guarantee the convergence in
Assertion \ref{ias1} is that the spectrum $\sigma (T)$, except
possibly for the point $\zeta =1$, lies in the half-plane
\begin{equation*}
\Pi =\{\zeta \in \mathbb{C}:\mathrm{Re}\zeta <1\}.  
\end{equation*}
Note that the conditions $\Vert T^{n}/n\Vert \rightarrow 0$ and
\textit{(i)} in Assertion \ref{ias1} imply that
\begin{equation}
\mathrm{Ker}(I-T)\oplus \mathrm{Im}(I-T)=X.  \label{i4}
\end{equation}%
Set $\rho (T)=\mathbb{C}\setminus \sigma (T)$. In \cite[Theorem
2.1]{KSZ}, a necessary and sufficient condition for the power
convergence of the Abel averages (\ref{i3})  was presented in
the following form.
\begin{assertion}
\label{ias2} Let $T$ be a densely defined linear operator such that
$(1,+\infty )\subset \rho (T)$. Let also $A_{\alpha }$ be its Abel
average (\ref{i3}). Then the following statements are equivalent.
\vskip.1cm
\begin{itemize}
\item[{\it (i)}] For each $\alpha \in (0,1)$, $A_{\alpha }\in
\mathcal{B}(X)$ and the  sequence $\{A_{\alpha }^{n}\}_{n\in \mathbb{N}}$
converges in the operator
norm topology.
\item[{\it(ii)}] $\sigma (T)\subset \Pi \cup \left\{ 1\right\} $ and
(\ref{i4}) holds.
\end{itemize}
\vskip.1cm \noindent For each $\alpha \in (0,1)$, the limit in (i) is given
by the projection of $X$ onto $\mathrm{Ker}(I-T)$ along $\mathrm{Im}(I-T)$.
\end{assertion}
Let $X^*$ be the dual space of $X$.  For $x^*\in X^*$ and $x\in X$,
by $\langle x, x^* \rangle$ we denote the value of the functional
$x^*$ at $x$. Then we set $J(x)=\{x^\ast\in X^\ast: \|x\|^2=\langle
x, x^\ast\rangle=\|x^\ast\|^2\}$. A linear operator $T$ is called
{\it dissipative}, see \cite[Definition 3.13 and Proposition
3.13]{EN}, if for each $x\in \mathcal{D}(T)$, there exists $x^* \in
J(x)$ such that the following holds
\begin{equation}
  \label{EN}
 {\rm Re}\langle Tx , x^* \rangle \leq 0.
\end{equation}
Similar definitions were used also for nonlinear holomorphic maps,
see \cite[Definition 2]{HRS}.
Note that a densely defined closed linear operator $T$ is {\sl dissipative} if
and only if, for all $\alpha \geq 0$,
\begin{equation}
  \label{i7}
 \|A_\alpha\| \leq 1.
\end{equation}
In view of their numerous
applications, dissipative maps constitute an important class of maps
in complex Banach spaces. In this work, we introduce the
following notion.
\begin{definition}
\label{disdf} Given $\omega \in \mathbb{R}$ and a dense subset
$D\subset B$, a map $h : D \to X$ is called $\omega$-dissipative if,
for some $\varepsilon >0$ and $\varsigma \in \mathbb{R}$, and for
each $x\in D$ such that $1 - \varepsilon < \|x\| < 1$, and all $x^*
\in J(x)$, the following inequality holds
\begin{equation}  \label{3b}
\mathrm{Re} \langle h(x), x^* \rangle \leq \omega \|x\|^2 + \varsigma (1 -
\|x\|^2).
\end{equation}
\end{definition}
Obviously, a $0$-dissipative linear operator is dissipative in the
sense of (\ref{EN}). Below we study the connection between
$\omega$-dissipativity of maps and the existence and contractiveness
of their Abel averages, which we define as follows. For a map $h : D
\subset B \to X$, and for real $\omega$ and $\alpha\neq 1/\omega$,
we set, cf. (\ref{i3}),
\begin{equation}  \label{4}
\Phi_\alpha = (I - \alpha h)^{-1} \circ [(1 - \alpha \omega ) I], \qquad
\Phi_0 = I.
\end{equation}
\begin{definition}
\label{gdf} A map $h:D \to X$ is called $\omega$-pseudo-contractive
if there exists $\delta>0$ such that for each $\alpha \in (0,
\delta)$, the Abel average $\Phi_\alpha$ defined in (\ref{4}) is in
$\mathrm{Hol}(B, D)$.
\end{definition}
Note that in Definition \ref{gdf} we do not require $D$ be dense in
$B$.
\begin{remark}
  \label{szymon}
The use of pseudo-contractive nonlinear maps goes back to papers by
 F. E. Browder \cite{Browder}
and W. A. Kirk \cite{Kirk}. In those works, $h$ is set to be
pseudo-contractive if $I-h$ is  accretive, and hence $h-I$ is
dissipative. A detailed study of accretive nonlinear maps can be
found in S. Reich's paper \cite{Reich}, cf. Theorem 3.1 on page 32.
As we will see below in Theorem \ref{2tm}, our term
$\omega$-pseudo-contractiveness with $\omega=1$ is parallel to the
pseudo-contractiveness in the works just mentioned.
\end{remark}

\begin{definition}
  \label{weakdf}
A map $h:D \to X$ is said to be closed in the weak topology if, for each
norm-convergent sequence $\{x_n \}_{n\in \mathbb{N}}\subset D$ such
that $\lim_{n\to \infty} x_n =: x \in D$ and such that the sequence
$\{h(x_n) \}_{n\in \mathbb{N}}$ weakly converges to  $y\in
X$, it follows that $h(x) = y$.
\end{definition}
Let $V$ be a domain containing $0$. Following L. Harris \cite{H}, we
say that the \textsl{spectrum} $\sigma(h)$ of $h\in
\mathrm{Hol}(V,X)$ consists of those $\lambda \in \mathbb{C}$, for
which there exist no open sets $U$, $W $ such that $0\in U\subset
V$, $W\subset X$, and $\lambda I - h$ is a holomorphic
diffeomorphism of $U$ onto $W$. It is known, see \cite{H}, that
\begin{equation*}  
\sigma (h) = \sigma (h^{\prime }(0)).
\end{equation*}
Given $h \in \mathrm{Hol}(B,X)$, suppose that $h$ can continuously be  extended
the boundary $\partial B$. Then the \textsl{numerical range} of $h$ is, cf. \cite{H,H1},
\begin{equation*} 
N (h) = \{ \langle h_s(x) , x^*\rangle : s\in (0,1), \ \ x \in
\partial B, \ \ x^* \in J(x)\}.
\end{equation*}
For $h \in \mathrm{Hol}(B,X)$ and
and $s\in (0,1)$, we let $h_s(x) = h(sx)$, $x\in B$. Then the  \textsl{numerical radius} of $h$ is
\begin{equation}  \label{H2}
L(h) = \limsup _{s\to 1^{-}} \mathrm{Re} N(h_s).
\end{equation}
If $h$ has a uniformly continuous extension to $\bar{B}$ then
$$L(h)= \lim_{t\to 0^{+}} \frac{\|I + t h\| - 1}{t},$$
by \cite[Theorem 2]{H} (see also the related
discussion in \cite[Section 2]{HRS}).

Each $h\in {\rm Hol}(V,X)$ has the following property: for every
$x\in V$, there exists $r>0$ such that $h$ is bounded on
$\bar{B}_r(x)$. From this one readily gets that $h$ is bounded on
each compact subset of $V$. However, $h$ need not be bounded on {\sl
each} bounded subset of $V$. Following \cite{BES,HRS} we say that a
holomorphic map $h:B \to X$ has \textit{unit radius of boundedness}
if, for each $r\in (0,1)$,
\begin{equation*}  
\sup_{x\in \bar{B}_r}\|h(x)\| := C_h(r) < \infty.
\end{equation*}
For nonlinear maps $g:U \to X$, $U \subseteq X$,  we use the following notations
\begin{eqnarray*} 
\mathrm{Null} (g) & = & \{ x\in U: g(x) = 0\}, \\[.2cm]
\mathrm{Im} (g) & = & \{ y\in X: \exists x \in U \ g(x) = y\}.  \notag
\end{eqnarray*}

\subsection{The results}
Let $\Delta:=\{\zeta\in \mathbb C: |\zeta|<1\}$ and
$\bar{\Delta}:=\{\zeta\in \mathbb C: |\zeta|\leq 1\}$. Recall that a
set $C\subset X$ is called {\sl balanced} if $\zeta \in
\bar{\Delta}$ for each $x\in C$ and $\zeta x \in C$.
\begin{theorem}
\label{g1tm} Let $D\subset B$ be dense in $\mathit{B}$ and balanced.
Let also $h:D\rightarrow X$ be an $\omega $-pseudo-contractive map,
closed in the weak topology. Assume that  either $X$ is reflexive,
or $X$ is weakly sequentially complete and $h$ is bounded on every
$K(x):= \{\zeta x:   \zeta \in \Delta\}$,  $x\in D$. Then $h$ is
$\omega$-dissipative.
\end{theorem}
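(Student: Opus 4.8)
The plan is to reduce the boundary inequality (\ref{3b}) to a one-dimensional Schwarz--Pick estimate for the \emph{holomorphic} Abel averages $\Phi_\alpha$, and then to pass to the limit $\alpha\to0^+$ using the weak closedness of $h$. Fix $x\in D$ with $\|x\|$ close to $1$ (so $x\neq0$) and $x^*\in J(x)$, and for $\alpha\in(0,\delta)$ put $w_\alpha:=\Phi_\alpha(x)\in D\subset B$. Unwinding the definition (\ref{4}) of $\Phi_\alpha$, the point $w_\alpha$ is characterized by $w_\alpha-\alpha h(w_\alpha)=(1-\alpha\omega)x$, so that
\begin{equation*}
w_\alpha-x=\alpha\bigl(h(w_\alpha)-\omega x\bigr).
\end{equation*}
Thus controlling $\mathrm{Re}\langle h(w_\alpha),x^*\rangle$ amounts to controlling $\mathrm{Re}\langle w_\alpha-x,x^*\rangle$, and the whole point is to bound the latter by a quantity of order $1-\|x\|^2$ rather than the trivial $O(1)$ coming from $\|w_\alpha\|<1$.

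The key step I would isolate is a slicing estimate. Writing $e:=x/\|x\|$ and testing $\Phi_\alpha$ along the complex line through $x$, consider $\phi(\zeta):=\|x\|^{-1}\langle\Phi_\alpha(\zeta e),x^*\rangle$ for $\zeta\in\Delta$. Since $\Phi_\alpha$ maps $B$ into $D\subset B$ and $\|x^*\|=\|x\|$, the map $\phi$ is a holomorphic self-map of $\Delta$ with $|\phi(0)|\le\|\Phi_\alpha(0)\|$. Applying the classical Schwarz--Pick inequality to $\phi$ at the real point $\zeta=\|x\|$ (and using $\phi(\|x\|)=\|x\|^{-1}\langle w_\alpha,x^*\rangle$), a short computation with the Möbius factor should yield
\begin{equation*}
\mathrm{Re}\langle\Phi_\alpha(x)-x,x^*\rangle\le\|\Phi_\alpha(0)\|\,\bigl(1-\|x\|^2\bigr).
\end{equation*}
Because $\Phi_\alpha(0)=(I-\alpha h)^{-1}(0)$ satisfies $\Phi_\alpha(0)=\alpha\,h(\Phi_\alpha(0))$, the right-hand side equals $\alpha\|h(\Phi_\alpha(0))\|\,(1-\|x\|^2)$; combining this with the identity for $w_\alpha-x$ and dividing by $\alpha$ gives the clean, $\alpha$-uniform bound
\begin{equation*}
\mathrm{Re}\langle h(w_\alpha),x^*\rangle\le\omega\|x\|^2+\|h(\Phi_\alpha(0))\|\,\bigl(1-\|x\|^2\bigr).
\end{equation*}

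It then remains to let $\alpha\to0^+$. Here I would use the structural hypotheses to show that $\Phi_\alpha(0)\to0$, that $\|h(\Phi_\alpha(0))\|$ stays bounded by a constant $\varsigma$ (which becomes the dissipativity constant), and that $w_\alpha\to x$ in norm; the last follows once $\{h(w_\alpha)\}$ is bounded, by the identity $w_\alpha-x=\alpha(h(w_\alpha)-\omega x)$. Granting this, either reflexivity of $X$, or weak sequential completeness together with the boundedness of $h$ on the balanced disks $K(\cdot)$, provides a subsequence along which $h(w_{\alpha_n})$ converges weakly; since $w_{\alpha_n}\to x$ in norm with $x\in D$ and $h$ is closed in the weak topology (Definition \ref{weakdf}), the weak limit must be $h(x)$. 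Passing to the limit in the last display, with the functional $x^*$ held fixed, then yields $\mathrm{Re}\langle h(x),x^*\rangle\le\omega\|x\|^2+\varsigma(1-\|x\|^2)$, which is exactly (\ref{3b}); since $x$ and $x^*\in J(x)$ were arbitrary, $h$ is $\omega$-dissipative.

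The slicing estimate is soft and essentially forced, so I expect the real difficulty to be this last, compactness-type step: producing a weakly convergent sequence $\{h(w_{\alpha_n})\}$ and controlling $\|h(\Phi_\alpha(0))\|$ as $\alpha\to0^+$. This is precisely where the dichotomy in the hypotheses is needed --- reflexivity to extract weak limits of bounded sequences, or weak sequential completeness to promote weakly Cauchy sequences to convergent ones, with the boundedness of $h$ along the disks $K(x)$ supplying the missing a priori bounds --- and it is the only place where more than the holomorphy and self-mapping property of $\Phi_\alpha$ enters.
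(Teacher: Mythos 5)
Your core computation is correct, and it takes a genuinely different route from the paper. The paper first proves that the difference quotient $(\Phi_\alpha(x)-x)/\alpha$ converges weakly to $h(x)-\omega x$ (formula (\ref{d5A})) and then feeds this derivative into the one-dimensional inequality of Proposition \ref{HRSpn}; you instead run a Schwarz--Pick argument directly on the slice $\zeta\mapsto\|x\|^{-1}\langle\Phi_\alpha(\zeta e),x^*\rangle$ at \emph{finite} $\alpha$. Your Möbius computation does check out: the image of the disc $\{|\zeta|\le r\}$, $r=\|x\|$, is contained in the Euclidean disc of centre $(1-r^2)\phi(0)/(1-r^2|\phi(0)|^2)$ and radius $r(1-|\phi(0)|^2)/(1-r^2|\phi(0)|^2)$, whence $\mathrm{Re}\,\phi(r)-r\le(1-r^2)|\phi(0)|$, which is your displayed estimate; the identity $\Phi_\alpha(0)=\alpha h(\Phi_\alpha(0))$ then gives the uniform-in-$\alpha$ inequality for $\mathrm{Re}\langle h(\Phi_\alpha(x)),x^*\rangle$. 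This ordering (inequality first, limit second) has a real advantage: weak closedness is applied exactly in the form of Definition \ref{weakdf}, at the point $x$ itself, and no radial limit $s\to1^-$ of $h$ along a ray is needed at the end.

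The genuine gap is in the limiting step, precisely where you write ``I would use the structural hypotheses''. The two bounds you need --- $\sup_{\alpha}\|h(\Phi_\alpha(0))\|<\infty$ (your $\varsigma$) and $\sup_{\alpha}\|h(\Phi_\alpha(x))\|<\infty$ (which gives both $w_\alpha\to x$ in norm and, in the reflexive case, weak subsequential compactness of $\{h(w_\alpha)\}$) --- are \emph{not} supplied by reflexivity, by weak sequential completeness, or by the boundedness of $h$ on the discs $K(x)$: reflexivity converts bounded sequences into weakly compact ones but cannot create the bound; $w_\alpha=\Phi_\alpha(x)$ does not lie on $K(x)$, so boundedness of $h$ along $K(x)$ says nothing about $h(w_\alpha)$; and in the reflexive alternative no boundedness of $h$ is assumed at all. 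As written, your argument is circular: $w_\alpha\to x$ needs $\{h(w_\alpha)\}$ bounded, and nothing in the proposal bounds it. The missing mechanism is the paper's Lemma \ref{D1lm}, estimate (\ref{New}): $\Phi_\alpha\in\mathrm{Hol}(B)$ is nonexpansive for the hyperbolic metric $\rho$ of $B$ and maps the explicit point $y_\alpha=(x-\alpha h(x))/(1-\alpha\omega)$ to $x$, so that
\begin{equation*}
\rho(\Phi_\alpha(x),x)=\rho\bigl(\Phi_\alpha(x),\Phi_\alpha(y_\alpha)\bigr)\le\rho(x,y_\alpha)\le c_2(x,l)\,\frac{\alpha}{1-\alpha\omega}\,\|\omega x-h(x)\|,
\end{equation*}
and the local equivalence (\ref{d3}) of $\rho$ with the norm yields $\|h(\Phi_\alpha(x))\|\le \frac{c_2}{c_1}\,\|\omega x-h(x)\|/(1-\alpha\omega)+|\omega|\,\|x\|$ uniformly for small $\alpha>0$; taking $x=0$ (which lies in $D$ since $D$ is balanced and nonempty) bounds $\|h(\Phi_\alpha(0))\|=\|\Phi_\alpha(0)\|/\alpha$. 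Note that this estimate works in \emph{every} Banach space and uses only holomorphy of $\Phi_\alpha$ and membership $x\in D$; it is where pseudo-contractivity really pays, not the reflexivity/completeness dichotomy.

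There is a second, smaller gap in the non-reflexive case: weak sequential completeness only upgrades \emph{weakly Cauchy} sequences to weakly convergent ones, so you must still produce a sequence $\alpha_n\to0^+$ along which $\langle h(w_{\alpha_n}),y^*\rangle$ converges for every $y^*\in X^*$. The paper manufactures this via Montel's theorem applied to the scalar holomorphic functions $\zeta\mapsto\langle(\Phi_\alpha(\zeta x)-\zeta x)/\alpha,y^*\rangle$ on $\Delta$, see (\ref{D8}); it is there --- not in your slicing step --- that the balancedness of $D$ and the boundedness of $h$ on $K(x)$ actually enter, making this family uniformly bounded on compact subsets of $\Delta$. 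Your outline offers no substitute for this. With (\ref{New}) and that Montel argument imported, your scheme does close, and gives a proof organized differently from, and at the final step arguably cleaner than, the paper's.
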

Note that densely defined
closed linear operators are closed in the weak topology and bounded
on the sets $K(x)$.  Indeed, for a linear operator $T$, the graph $\Gamma(T)\subset X\times X$ is a convex set, which is
weakly closed whenever $T$ is closed in the usual sense. Moreover,
$TK(x) = K(Tx)$, which yields that $T$ is bounded on
compact subsets of $K(x)$. Set
\begin{eqnarray}  \label{5}
& & Q_\omega = [0, 1/\omega), \ \ \omega>0; \quad Q_0 = [0,
+\infty),
\\[.2cm]
& & Q_\omega = (-\infty , 1/\omega) \cup [0, +\infty), \ \ \omega<0.
\notag
\end{eqnarray}
\begin{theorem}
\label{2tm} Given $h \in {\rm Hol}(B, X)$  and $\omega\in
\mathbb{R}$, the following statements are equivalent:
\begin{itemize}
\item[{\it(i)}] $h$ is $\omega$-dissipative.
\item[{\it (ii)}] $L(h) \leq \omega$.
\item[{\it (iii)}] $h$ has unit radius of boundedness and it is $\omega$%
-pseudo-contractive. Moreover, for all $x\in B$,
\begin{equation}  \label{H4}
\lim_{\alpha \to 0^{+}} \Phi_\alpha (x) = x, \qquad \lim_{\alpha \to 0^{+}}%
\frac{\Phi_\alpha (x) - x}{\alpha} = h(x) - \omega x,
\end{equation}
where the convergence is pointwise in the norm topology of $X$.
\item[{\it (iv)}] $h$ has unit radius of boundedness and $\Phi_\alpha\in
\mathrm{Hol}(B)$, for each $\alpha \in Q_\omega$.
\end{itemize}
\end{theorem}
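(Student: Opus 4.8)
The plan is to prove Theorem~\ref{2tm} by establishing the cyclic chain of implications $(i)\Rightarrow(ii)\Rightarrow(iii)\Rightarrow(iv)\Rightarrow(i)$, since a cycle is the most economical way to handle four equivalent statements. Before starting the cycle, I would record a few analytic identities that tie the definitions together. The crucial computation concerns the Abel average $\Phi_\alpha=(I-\alpha h)^{-1}\circ[(1-\alpha\omega)I]$. Writing $y=\Phi_\alpha(x)$, the defining relation $(I-\alpha h)(y)=(1-\alpha\omega)x$ rearranges to $y-x=\alpha h(y)-\alpha\omega x$, i.e. $\Phi_\alpha(x)-x=\alpha\bigl(h(\Phi_\alpha(x))-\omega x\bigr)$. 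This single identity is the engine behind the limit relations~\eqref{H4}: as $\alpha\to0^+$ one expects $\Phi_\alpha(x)\to x$ by a contraction/fixed-point argument, and then the difference quotient $(\Phi_\alpha(x)-x)/\alpha=h(\Phi_\alpha(x))-\omega x\to h(x)-\omega x$ by continuity of $h$.

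The implication $(i)\Rightarrow(ii)$ should be the most direct: from the $\omega$-dissipativity inequality~\eqref{3b}, evaluated on the scaled maps $h_s$ near $\partial B$, I would take the limit defining $L(h)$ in~\eqref{H2}. For $x\in\partial B$ one has $\|x\|=1$, so the slack term $\varsigma(1-\|x\|^2)$ vanishes and~\eqref{3b} forces $\mathrm{Re}\langle h(x),x^*\rangle\le\omega$; passing through the $\limsup$ in the definition of $L(h)$ yields $L(h)\le\omega$. For $(ii)\Rightarrow(iii)$, I would use the numerical-range characterization together with the formula $L(h)=\lim_{t\to0^+}(\|I+th\|-1)/t$, which controls the growth of $I+\alpha h$ and hence the invertibility of $I-\alpha h$ on a neighborhood; this is where one checks unit radius of boundedness and solves the fixed-point equation $y=\alpha h(y)+(1-\alpha\omega)x$ for small $\alpha$ to produce $\Phi_\alpha\in\mathrm{Hol}(B,D)$, establishing $\omega$-pseudo-contractivity and the limits~\eqref{H4} via the identity above.

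The step $(iii)\Rightarrow(iv)$ is about \emph{extending} the range of admissible $\alpha$ from a small interval $(0,\delta)$ to the whole set $Q_\omega$ defined in~\eqref{5}. Here I would exploit a semigroup-type resolvent identity relating $\Phi_\alpha$ and $\Phi_\beta$ for different parameters, or equivalently re-solve the fixed-point equation using the a priori bound supplied by unit radius of boundedness to show the inverse persists as a holomorphic self-map of $B$ for every $\alpha\in Q_\omega$. The three cases $\omega>0$, $\omega=0$, $\omega<0$ in~\eqref{5} will have to be treated according to the sign, since the pole $\alpha=1/\omega$ of the normalizing factor $1-\alpha\omega$ must be excluded. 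Finally $(iv)\Rightarrow(i)$ recovers dissipativity: from $\Phi_\alpha\in\mathrm{Hol}(B)$ (a self-map of the unit ball) one reads off $\|\Phi_\alpha(x)\|\le\|x\|$-type contractivity, and feeding this back through the identity $\Phi_\alpha(x)-x=\alpha(h(\Phi_\alpha(x))-\omega x)$ and pairing with $x^*\in J(x)$ yields inequality~\eqref{3b} in the limit $\alpha\to0^+$.

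\medskip

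\noindent\textbf{Main obstacle.} The hard part will be $(ii)\Rightarrow(iii)$ and the range-extension in $(iii)\Rightarrow(iv)$: both require genuinely \emph{nonlinear} inversion of $I-\alpha h$ and careful holomorphic fixed-point estimates, rather than the spectral/resolvent bounds available in the linear theory of Assertions~\ref{ias1} and~\ref{ias2}. Controlling that $\Phi_\alpha$ maps $B$ into $B$ (self-map property) uniformly, while only assuming unit radius of boundedness, is delicate, and the sign-dependent structure of $Q_\omega$ means the contraction constant governing the fixed-point iteration must be tracked explicitly across the boundary value $\alpha=1/\omega$.
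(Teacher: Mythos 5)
Your overall cyclic scheme is reasonable, and two of your observations match the paper: $(i)\Rightarrow(ii)$ is indeed immediate, and the identity $\Phi_\alpha(x)-x=\alpha\bigl(h(\Phi_\alpha(x))-\omega x\bigr)$ is exactly what drives the limits \eqref{H4}. But the two implications you yourself flag as hard contain genuine gaps, and in both cases the missing ingredient is the same pair of tools the paper isolates: the nonlinear Lumer--Phillips theorem of Harris--Reich--Shoikhet (Proposition \ref{Hpn}, i.e.\ \cite[Theorem 1]{HRS}) and its companion Schwarz-type lemma (Proposition \ref{HRSpn}, i.e.\ \cite[Lemma 4]{HRS}). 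For $(ii)\Rightarrow(iii)$, a contraction/fixed-point argument cannot produce $\Phi_\alpha\in\mathrm{Hol}(B)$: unit radius of boundedness only bounds $h$ on each $\bar B_r$ with $r<1$, and $C_h(r)$ may blow up as $r\to1^-$, so a Banach fixed-point solution of $y=\alpha h(y)+(1-\alpha\omega)x$ yields an $\alpha$-threshold depending on $\|x\|$, never a single $\alpha$ for which $\Phi_\alpha$ is defined on the whole ball --- yet this is what both (iii) and (iv) require. Moreover, the formula $L(h)=\lim_{t\to0^+}(\|I+th\|-1)/t$ you invoke is valid only when $h$ extends uniformly continuously to $\bar B$, which is not assumed. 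The paper instead applies Proposition \ref{Hpn} to $g=h-\omega I$ (giving small $\alpha$), and Lemma \ref{1tm} (giving all $\alpha\in Q_\omega$); note that Lemma \ref{1tm} needs $L(h)\le\omega$, so the extension to all of $Q_\omega$ genuinely passes through dissipativity --- this is precisely why the paper proves $(iii)\Rightarrow(i)\Rightarrow(iv)$ rather than your direct $(iii)\Rightarrow(iv)$, whose proposed ``resolvent identity'' extension is unsubstantiated.

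The closing step $(iv)\Rightarrow(i)$ is also broken as written. A holomorphic self-map $\Phi_\alpha$ of $B$ does not satisfy $\|\Phi_\alpha(x)\|\le\|x\|$: it need not fix the origin (indeed $\Phi_\alpha(0)$ solves $y=\alpha h(y)$), and it is contractive only for the hyperbolic metric. If you replace this by the crude bound $\mathrm{Re}\langle\Phi_\alpha(x),x^*\rangle\le\|x\|$, pairing your identity with $x^*$ gives $\mathrm{Re}\langle h(\Phi_\alpha(x)),x^*\rangle\le\omega\|x\|^2+\|x\|(1-\|x\|)/\alpha$, and the $1/\alpha$ term cannot be absorbed into a fixed $\varsigma(1-\|x\|^2)$ as $\alpha\to0^+$. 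The paper's Lemma \ref{H2lm} circumvents exactly this difficulty by a one-variable reduction: the function $f(\zeta,t)=\langle y_t(\zeta),x^*\rangle/(\|x\|(1+\omega t))$ is a holomorphic self-map of $\Delta$ with $f(\zeta,0)=\zeta\|x\|$, and Proposition \ref{HRSpn} applied to its one-sided $t$-derivative at $t=0$ yields $\mathrm{Re}\langle h(sx),x^*\rangle\le\omega s\|x\|^2+(1-s^2\|x\|^2)\,\mathrm{Re}\langle h(0),x^*\rangle$, from which \eqref{3b} follows by letting $s\to1^-$. Without this lemma (or an equivalent quantitative Schwarz--Pick estimate), none of your arrows into (i) closes, so the proposal as it stands does not constitute a proof.
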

As an immediate consequence of Theorem \ref{g1tm} and Theorem
\ref{2tm} we have the following:
\begin{corollary}
\label{Coco}
Let $X$ be a weakly sequentially complete Banach spaces, $\omega\in \mathbb R$ and
$h\in {\rm Hol}(B,X)$. Then $h$ is $\omega$-dissipative if and only if  it is
$\omega$-pseudo-contractive.
\end{corollary}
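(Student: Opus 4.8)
The plan is to deduce the corollary by pairing the two main theorems, each handling one implication, with $B$ itself playing the role of the dense balanced subset $D$ in Theorem \ref{g1tm}. In one direction I quote Theorem \ref{2tm}, and in the other I apply Theorem \ref{g1tm} under its weakly-sequentially-complete alternative, so the only work is to check that a single holomorphic map on $B$ satisfies the hypotheses of both theorems.

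First I would treat the implication ``$\omega$-dissipative $\Rightarrow$ $\omega$-pseudo-contractive''. Since $h\in\mathrm{Hol}(B,X)$, Theorem \ref{2tm} applies verbatim, and its implication (i)$\Rightarrow$(iii) yields at once that $h$ is $\omega$-pseudo-contractive (in fact it also has unit radius of boundedness and satisfies the limit relations \eqref{H4}, which are more than we need here). This half requires nothing beyond citing Theorem \ref{2tm}.

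For the converse, ``$\omega$-pseudo-contractive $\Rightarrow$ $\omega$-dissipative'', I would invoke Theorem \ref{g1tm} with $D:=B$, and so must verify its assumptions. The set $B$ is trivially dense in itself and is balanced, since $\|\zeta x\|=|\zeta|\,\|x\|<1$ for $x\in B$ and $\zeta\in\bar{\Delta}$. Because $h$ is holomorphic it is continuous (Fr\'echet differentiability implies continuity), so any norm-convergent sequence $x_n\to x$ in $B$ forces $h(x_n)\to h(x)$ in norm, hence weakly; by uniqueness of weak limits this shows $h$ is closed in the weak topology in the sense of Definition \ref{weakdf}. Finally, $X$ is weakly sequentially complete by hypothesis, and for each $x\in B$ the set $\overline{K(x)}=\{\zeta x:\zeta\in\bar{\Delta}\}$ is the image of the compact disc $\bar{\Delta}$ under the continuous map $\zeta\mapsto\zeta x$, hence compact and contained in $\bar{B}_{\|x\|}\subset B$; since a holomorphic map is bounded on compact subsets of its domain, $h$ is bounded on $K(x)$. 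Thus the second alternative of Theorem \ref{g1tm} is met, and $h$ is $\omega$-dissipative.

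Since both implications reduce to quoting established results together with elementary checks, I do not expect a substantial obstacle. The only point that genuinely demands care is confirming that a nonlinear holomorphic $h$ meets the weak-closedness and $K(x)$-boundedness requirements of Theorem \ref{g1tm}; this is precisely where continuity and boundedness on compacta of holomorphic maps are used, and it is what lets the weakly-sequentially-complete branch of Theorem \ref{g1tm} replace the reflexivity assumption so that the corollary's single hypothesis on $X$ suffices.
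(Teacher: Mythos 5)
Your proof is correct and is essentially the paper's own argument: the corollary is stated there as an immediate consequence of Theorems \ref{g1tm} and \ref{2tm}, with one direction given by (i)$\Rightarrow$(iii) of Theorem \ref{2tm} and the other by applying Theorem \ref{g1tm} with $D=B$. Your verifications that a holomorphic map is weakly closed (via continuity) and bounded on each $K(x)$ (via compactness of $\overline{K(x)}\subset B$) are exactly the elementary checks the authors leave implicit.
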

Next, we study the convergence of nonlinear Abel averages in the
holomorphic category. As usually, for $h\in \mathrm{Hol}(B)$ and
$n\in \mathbb{N}_0$, by $h^{n}$  we denote  the $n$-th iterate of
$h$, {\sl i.e.}, $h^{n}:=h^{n-1}\circ h$, $h^{0}=I$.
\begin{theorem}
\label{3tm} Let $h \in \mathrm{Hol} (B,X)$ be such that $h(0) =0$. Suppose
that, for some $\omega \in \mathbb{R}$, the following holds
\begin{equation}  \label{13}
\mathrm{Ker} (\omega I - h^{\prime }(0)) \oplus \mathrm{Im} (\omega I -
h^{\prime }(0))= X.
\end{equation}
Assume also that $\Phi_\alpha\in\mathrm{Hol} (B)$ for a certain
$\alpha\in \mathbb{R}\setminus \{0\}$ such that $\alpha\omega \neq
1$. Then the sequence $\{ \Phi_\alpha^n\}_{n\in \mathbb{N}}$
converges in the norm topology of $X$, uniformly on closed subsets
of $B$, if and only if the following holds
\begin{equation}  \label{14}
\sigma (h^{\prime }(0))\subset \Omega(\alpha , \omega):= \{\omega\}
\cup \left\{ \zeta \in \mathbb{C}: \left\vert\zeta - \frac{1}{
\alpha} \right\vert
> \left\vert \omega - \frac{1}{\alpha}  \right\vert\right\}.
\end{equation}
Moreover, the limit of $\{\Phi^n_\alpha\}_{n\in \mathbb{N}}$ is a
holomorphic retraction
\begin{equation*}
\phi_\alpha: B \to \mathrm{Null}(\omega I - h):=\{x \in B: h(x)=\omega x\}.
\end{equation*}
\end{theorem}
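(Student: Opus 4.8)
The plan is to reduce the whole statement to the linear behaviour of $\Phi:=\Phi_\alpha$ at the fixed point $0$ and then to lift the resulting linear power convergence to the holomorphic category. First I would collect the elementary reductions. Since $h(0)=0$ and $\alpha\omega\neq1$, one checks from \eqref{4} that $\Phi(0)=0$ and that $\Phi(x)=x$ holds precisely when $h(x)=\omega x$, so that $\mathrm{Fix}(\Phi)=\mathrm{Null}(\omega I-h)$. Put $T:=h'(0)\in\mathcal B(X)$. The assumption $\Phi\in\mathrm{Hol}(B)$ forces $I-\alpha T$ to be invertible, hence $1/\alpha\in\rho(T)$, and the chain rule gives $S:=\Phi'(0)=(1-\alpha\omega)(I-\alpha T)^{-1}$ and $(\Phi^n)'(0)=S^n$.

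Next I would translate the two hypotheses into conditions on $S$. The M\"obius function $f(\zeta)=(1-\alpha\omega)/(1-\alpha\zeta)$ is holomorphic near $\sigma(T)$, so the spectral mapping theorem yields $\sigma(S)=f(\sigma(T))$; since $f(\omega)=1$ and $|f(\zeta)|<1$ is equivalent to $|\zeta-1/\alpha|>|\omega-1/\alpha|$, condition \eqref{14} is exactly $\sigma(S)\subseteq\Delta\cup\{1\}$. For the decomposition, the identity $I-S=\alpha(I-\alpha T)^{-1}(\omega I-T)$, together with the invertibility of $(I-\alpha T)^{-1}$ and its commuting with $\omega I-T$, gives $\mathrm{Ker}(I-S)=\mathrm{Ker}(\omega I-T)$ and $\mathrm{Im}(I-S)=\mathrm{Im}(\omega I-T)$; hence \eqref{13} is precisely $X=\mathrm{Ker}(I-S)\oplus\mathrm{Im}(I-S)$. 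This last condition makes $\omega$ an isolated point of $\sigma(T)$ (a simple pole of the resolvent, or a resolvent point), so with \eqref{14} it produces a spectral gap $\sup\{|\mu|:\mu\in\sigma(S)\setminus\{1\}\}<1$; writing $P$ for the projection onto $\mathrm{Ker}(I-S)$ along $\mathrm{Im}(I-S)$ one then has $SP=P$ and $\|S^n(I-P)\|\to0$, so $S^n\to P$ in operator norm. The necessity half of the theorem is now immediate: if $\Phi^n\to\phi$ uniformly on closed subsets of $B$, the Cauchy estimates give $S^n=(\Phi^n)'(0)\to\phi'(0)$ in operator norm, and a norm--power--convergent operator must satisfy $\sigma(S)\subseteq\Delta\cup\{1\}$, that is \eqref{14}.

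The substantial direction, and the main obstacle, is to lift $S^n\to P$ to local uniform convergence of the iterates $\Phi^n$. I would work in the splitting $X=X_1\oplus X_0$, $X_1=\mathrm{Im}\,P=\mathrm{Ker}(I-S)$, $X_0=\mathrm{Ker}\,P=\mathrm{Im}(I-S)$, in which $S=I\oplus S_0$ with $r(S_0)<1$. Solving the transverse equation $(I-P)(\Phi(x)-x)=0$ by the implicit function theorem (its $X_0$--derivative at $0$ is $S_0-I$, an isomorphism of $X_0$) presents near $0$ a $\Phi$--invariant holomorphic graph $M=\{v=\psi(u)\}$ over $X_1$, tangent to $X_1$. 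On $M$ the map $\Phi$ is a holomorphic self-map of a bounded domain with differential $I$ at the fixed point $0$; by Cartan's uniqueness theorem $\Phi|_M=\mathrm{id}$, so that $M\subseteq\mathrm{Fix}(\Phi)$. Straightening $M$ to $\{v=0\}$ by a local biholomorphism fixing $0$ with identity differential, I may assume $\Phi(u,0)=(u,0)$, whence $\Phi(u,v)=(u+A(u,v)v,\ B(u,v)v)$ with $B(0,0)=S_0$. Renorming so that $\|S_0\|<1$, on a small invariant neighbourhood of $0$ the transverse part contracts geometrically, $\|v_n\|\le q^n\|v_0\|$, and the tangential increments obey $\|u_{n+1}-u_n\|\le\|A\|\,\|v_n\|$ and are therefore summable; hence $\Phi^n$ converges uniformly on a neighbourhood of $0$. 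Since every iterate maps $B$ into $B$, the family $\{\Phi^n\}$ is locally bounded, and Vitali's theorem upgrades this to uniform convergence on all closed subsets of the connected domain $B$.

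It remains to identify the limit $\phi$. By continuity $\Phi\circ\phi=\phi\circ\Phi=\phi$, so $\phi(x)\in\mathrm{Fix}(\Phi)$ for every $x$ and $\phi$ restricts to the identity on $\mathrm{Fix}(\Phi)$; consequently $\phi\circ\phi=\phi$, i.e.\ $\phi$ is a holomorphic retraction of $B$ onto $\mathrm{Fix}(\Phi)=\mathrm{Null}(\omega I-h)$, as claimed. I expect the two delicate points to be the invariant--manifold step together with the application of Cartan's theorem (which is what forces $M$ to lie in $\mathrm{Fix}(\Phi)$ and makes the transverse contraction argument work), and the correct infinite--dimensional form of Vitali's theorem used in the globalisation; the spectral and algebraic reductions of the first two paragraphs are routine.
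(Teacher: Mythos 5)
Your first two paragraphs and your necessity argument are correct, and they coincide with what the paper actually does: the paper computes $\Phi_\alpha'(0)=(1-\alpha\omega)(I-\alpha h'(0))^{-1}$, applies the spectral mapping theorem to $\psi(\zeta)=(1-\alpha\omega)/(1-\alpha\zeta)$ to see that \eqref{14} is exactly $\sigma(\Phi_\alpha'(0))\subset\Delta\cup\{1\}$, and checks $\mathrm{Ker}(I-\Phi_\alpha'(0))=\mathrm{Ker}(\omega I-h'(0))$, $\mathrm{Im}(I-\Phi_\alpha'(0))=\mathrm{Im}(\omega I-h'(0))$, so that \eqref{13} becomes the statement that $1$ is at most a simple pole of the resolvent. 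At that point, however, the paper simply invokes Proposition \ref{kolihapn} (the Koliha--Vesentini equivalence between norm power convergence of $\Psi'(0)$ and local uniform convergence of the iterates $\Psi^n$ to a retraction onto $\mathrm{Fix}(\Psi)$), whereas you attempt to reprove that equivalence from scratch. That is where your argument has a genuine gap.

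The gap is the assertion that the graph $M=\{v=\psi(u)\}$ produced by the implicit function theorem from the transverse equation $(I-P)(\Phi(x)-x)=0$ is $\Phi$-invariant. The implicit function theorem gives no such thing, and the assertion is false for a general holomorphic germ with exactly your spectral data: in $\mathbb{C}^2$ take $\Phi(u,v)=(u+v^2,\ v/2+u^2)$, so $S=\Phi'(0)=\mathrm{diag}(1,1/2)$ satisfies both $\sigma(S)\subset\Delta\cup\{1\}$ and $\mathrm{Ker}(I-S)\oplus\mathrm{Im}(I-S)=X$; the transverse equation reads $v/2+u^2=v$, so $M=\{v=2u^2\}$, yet $\Phi(u,2u^2)=(u+4u^4,\ 2u^2)\notin M$ for small $u\neq 0$ (here in fact $\mathrm{Fix}(\Phi)=\{0\}\subsetneq M$). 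Without invariance, Cartan's uniqueness theorem cannot be applied to $\Phi|_M$, and everything downstream (straightening, $\Phi(u,0)=(u,0)$, the transverse contraction estimate) collapses. This particular $\Phi$ is of course not a self-map of any ball --- and that is precisely the point: your construction of $M$ and the invariance claim are purely local statements about the germ of $\Phi$ at $0$ and never use the hypothesis $\Phi(B)\subseteq B$, but the fact that $\mathrm{Fix}(\Phi)$ fills out a whole graph over a neighbourhood of $0$ in $X_1$ (equivalently, that $M\subseteq\mathrm{Fix}(\Phi)$) is a consequence of the global self-map property and is essentially the content of the Vesentini/Cartan--Vigu\'e theory that the paper imports as Proposition \ref{kolihapn}. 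So your proof of the sufficiency direction is circular at its core step unless you supply an independent proof of that result; by contrast, your spectral reductions, the necessity half via Cauchy estimates, and the propagation of uniform convergence from a neighbourhood of $0$ to every $\bar{B}_r$ (by radial slices and a vector-valued Vitali argument, using that $B$ is balanced) are sound.
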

Contrary to the case of linear operators, the retractions
$\phi_\alpha$ may depend on $\alpha$, see Section \ref{dep} below.

\section{Proof of the theorems}

\subsection{Dissipative and pseudo-contractive maps}

\label{dis}

\begin{lemma}
\label{D1lm} Let $h:D \to X$ be $\omega$-pseudo-contractive. Then $D$ is
dense in $B$ if and only if, for each $x\in B$,
\begin{equation}  \label{gG}
\lim_{\alpha \to 0^{+}} \Phi_\alpha (x) = \Phi_0 (x)= x,
\end{equation}
where the convergence is in the norm topology of $X$.
\end{lemma}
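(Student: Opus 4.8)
The plan is to treat the two implications separately, the nontrivial content lying entirely in ``$D$ dense $\Rightarrow$ (\ref{gG})''. The reverse implication ``(\ref{gG}) $\Rightarrow$ $D$ dense'' I would dispose of immediately: $\omega$-pseudo-contractivity gives $\Phi_\alpha\in\mathrm{Hol}(B,D)$ for all small $\alpha>0$, so $\Phi_\alpha(x)\in D$ for every $x\in B$; if $\Phi_\alpha(x)\to x$ in norm, then $x$ is a norm limit of points of $D$, and since $x\in B$ is arbitrary, $B\subset\overline D$.

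For the main implication my strategy is to produce a uniform-in-$\alpha$ equicontinuity estimate for the family $\{\Phi_\alpha\}$, prove (\ref{gG}) first on the dense set $D$, and then transfer it to all of $B$. First I would establish the equicontinuity. Fixing $r\in(0,1)$ and $x_0\in\bar B_r$, the ball $B_{1-r}(x_0)$ lies in $B$, and there $\|\Phi_\alpha\|\le1$ because $\Phi_\alpha(B)\subset D\subset B$; the Cauchy estimate for the Fr\'echet derivative then yields $\|\Phi_\alpha'(x_0)\|\le(1-r)^{-1}$ independently of $\alpha$, and by convexity of $\bar B_r$ each $\Phi_\alpha$ is Lipschitz on $\bar B_r$ with the $\alpha$-independent constant $(1-r)^{-1}$.

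The key step---and the point where the naive approach breaks down---is the convergence on $D$. Rewriting the defining relation $(I-\alpha h)\Phi_\alpha(z)=(1-\alpha\omega)z$ as $\Phi_\alpha(z)-z=\alpha\big(h(\Phi_\alpha(z))-\omega z\big)$ is useless, since $h$ need not be bounded on $D$ and I cannot control $h(\Phi_\alpha(z))$. Instead I would run the inverse in the opposite direction: for fixed $z\in D$ the vector $h(z)$ is a \emph{fixed} element of $X$, and setting
\[
u_\alpha:=\frac{z-\alpha h(z)}{1-\alpha\omega}
\]
one has $(1-\alpha\omega)u_\alpha=(I-\alpha h)(z)$, hence $\Phi_\alpha(u_\alpha)=z$ directly from $\Phi_\alpha=(I-\alpha h)^{-1}\circ[(1-\alpha\omega)I]$. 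Since $\|z\|<1$ and $u_\alpha\to z$ as $\alpha\to0^{+}$, for any $r\in(\|z\|,1)$ both $z$ and $u_\alpha$ lie in $\bar B_r$ for all small $\alpha$, so the uniform Lipschitz bound gives
\[
\|\Phi_\alpha(z)-z\|=\|\Phi_\alpha(z)-\Phi_\alpha(u_\alpha)\|\le(1-r)^{-1}\|z-u_\alpha\|\to0 .
\]

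It remains to pass from $D$ to $B$. For $x\in B$ I would pick, using density, a point $z\in D$ close to $x$ and split
\[
\|\Phi_\alpha(x)-x\|\le\|\Phi_\alpha(x)-\Phi_\alpha(z)\|+\|\Phi_\alpha(z)-z\|+\|z-x\|,
\]
controlling the first term by $(1-r)^{-1}\|x-z\|$ through equicontinuity and the middle term by the convergence just obtained; sending $\alpha\to0^{+}$ and then $z\to x$ yields (\ref{gG}). I expect the only genuinely delicate point to be the uniform-in-$\alpha$ Lipschitz control, which is precisely what the Cauchy estimate delivers once one observes that $\Phi_\alpha$ maps $B$ into $B$.
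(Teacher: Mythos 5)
Your proof is correct, and its skeleton is essentially the paper's: the easy direction is handled identically, and your key device --- for $z\in D$, the point $u_\alpha=(z-\alpha h(z))/(1-\alpha\omega)$ satisfies $\Phi_\alpha(u_\alpha)=z$ --- is precisely the paper's $y_\alpha$ from \eqref{g0}, after which both arguments pass from $D$ to all of $B$ by density, equicontinuity and the triangle inequality. The one genuine difference is the source of the uniform-in-$\alpha$ equicontinuity. You extract a norm-Lipschitz constant $(1-r)^{-1}$ on $\bar{B}_r$ from the Cauchy estimate, using only that $\Phi_\alpha(B)\subset B$; the paper instead uses that each $\Phi_\alpha\in\mathrm{Hol}(B)$ is nonexpansive for the hyperbolic metric $\rho$ (the Schwarz--Pick inequality \eqref{contract-hyp}) together with the local equivalence \eqref{d3} of $\rho$ and the norm. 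Your route is more elementary and self-contained, and it delivers the same quantitative content: $\|\Phi_\alpha(z)-z\|\le (1-r)^{-1}\|z-u_\alpha\|=\frac{(1-r)^{-1}\alpha}{1-\alpha\omega}\,\|\omega z-h(z)\|$, a norm analogue of \eqref{d4}. What the hyperbolic-metric formulation buys is coherence with the rest of the paper, where the estimates \eqref{d3}--\eqref{d4} are reused verbatim (in the proofs of Lemma \ref{H2lm} and Theorem \ref{g1tm}); for the lemma itself, either tool works.
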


\begin{proof}
If (\ref{gG}) holds for all $x\in B$, then $D$ is dense since $\Phi_\alpha
(x) \in D$ for all $\alpha \in (0,\delta)$ and $x\in B$.

Let us prove the converse. First we prove that (\ref{gG}) holds for all $%
x\in D$. For $\alpha \in (0,\delta)$, we define
\begin{equation}  \label{g0}
y_\alpha = \frac{x - \alpha h(x)}{1- \alpha \omega }, \qquad x\in D.
\end{equation}
Clearly, $y_\alpha \in B$ for small enough $\alpha$, and hence we can
compute $\Phi_\alpha (y_\alpha)$ and get $\Phi_\alpha (y_\alpha)=x$. Let $%
\rho$ be the hyperbolic metric on $B$.\footnote{$\rho$ is a
pseudometric on $B$ satisfying the Schwarz-Pick inequality. All such
pseudometics coincide; they are metrics on $B$, see, {\sl e.g.}, \cite[page 90]%
{GoebelR} or \cite[Section 3.6, page 97]{ReichS}.} Then
\begin{equation}  \label{contract-hyp}
\rho (\Phi_\alpha (x), x) = \rho (\Phi_\alpha (x), \Phi_\alpha (y_\alpha))
\leq \rho (x,y_\alpha),
\end{equation}
since $\Phi_\alpha \in \mathrm{Hol}(B)$. It is known \cite[Theorem 3.7, page
99]{ReichS} that $\rho$ is locally equivalent to the norm metric of $B$.
Moreover, for each $x\in B$ and $l\in (0,1)$, there exist positive $c_1
(x,l) $ and $c_2 (x,l)$ such that, for all $y \in \bar{B}_l$,
\begin{equation}  \label{d3}
c_1 (x, l) \| x-y\| \leq \rho(x,y) \leq c_2 (x,l) \|x-y\|.
\end{equation}
Now we choose some $l\in (\|x\|, 1)$ and $\alpha_l>0$ such that $%
y_{\alpha_1} $ in (\ref{g0}) lies in $\bar{B}_l$ for $\alpha < \alpha_l$.
Then by (\ref{d3}) and \eqref{contract-hyp},
\begin{equation}  \label{d4}
\rho (\Phi_\alpha (x), x) \leq \frac{c_2 (x,l) \alpha}{ 1 - \alpha \omega}
\| \omega x - h(x)\|,
\end{equation}
which implies (\ref{gG}) for this $x$.

Since $D$ is dense in $B$, the general case of $x\in B$ can be
handled by the triangle inequality and the result just proven.
\end{proof}
\noindent As a direct consequence of (\ref{gG}) we have the
following result.
\begin{corollary}
\label{G1cor} Let $D\subset B$ be dense and $h:D\subset B \to X$ be
$\omega$-pseudo-contractive. Then, for each $x\in B$ and $r\in
(\|x\|,1)$, there exists $\delta_r < \delta$ such that $\Phi_\alpha
(x) \in \bar{B}_r$ whenever $\alpha \in [0, \delta_r]$.
\end{corollary}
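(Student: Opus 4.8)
The plan is to read the conclusion straight off the pointwise convergence established in Lemma~\ref{D1lm}. Since $D$ is dense in $B$ and $h$ is $\omega$-pseudo-contractive, that lemma supplies (\ref{gG}), namely $\Phi_\alpha(x) \to x$ in the norm topology of $X$ as $\alpha \to 0^+$, together with $\Phi_0(x) = x$. Everything else is an elementary continuity-of-the-norm argument, so no genuine obstacle is expected; the content is entirely absorbed into the preceding lemma.

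First I would fix $x\in B$ and $r\in(\|x\|,1)$, and set $\eta := r - \|x\| > 0$. By the definition of the limit in (\ref{gG}), there is a threshold $\delta_r>0$ such that $\|\Phi_\alpha(x) - x\| < \eta$ for every $\alpha\in(0,\delta_r)$; replacing $\delta_r$ by $\min\{\delta_r,\delta/2\}$ if necessary, I may also arrange $\delta_r<\delta$, as required by the statement.

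Then the triangle inequality gives, for all such $\alpha$,
\begin{equation*}
\|\Phi_\alpha(x)\| \le \|\Phi_\alpha(x) - x\| + \|x\| < \eta + \|x\| = r,
\end{equation*}
so $\Phi_\alpha(x)\in B_r\subset \bar{B}_r$. The endpoint $\alpha=0$ is handled separately by $\Phi_0 = I$: since $\Phi_0(x)=x$ and $\|x\|<r$, we also have $x\in\bar{B}_r$. Thus $\Phi_\alpha(x)\in\bar{B}_r$ for all $\alpha\in[0,\delta_r]$, which is the assertion.

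The only points requiring minimal care are the separate treatment of $\alpha=0$ via the convention $\Phi_0=I$, and the bookkeeping needed to ensure $\delta_r<\delta$, both of which are routine. In particular, no use of the balancedness of $D$ or of the weak-closedness hypotheses of Theorem~\ref{g1tm} is needed here—only density of $D$ in $B$ and $\omega$-pseudo-contractivity, exactly the hypotheses under which (\ref{gG}) was derived.
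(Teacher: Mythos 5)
Your proof is correct and takes exactly the paper's route: the paper presents this corollary as ``a direct consequence of (\ref{gG})'' from Lemma \ref{D1lm}, which is precisely the triangle-inequality argument you spell out, using only density and $\omega$-pseudo-contractivity. The single nit is that your threshold gives the bound on the open interval $(0,\delta_r)$ while the statement asks for the closed interval $[0,\delta_r]$; shrinking to $\delta_r/2$ fixes this trivially.
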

Next, we need the following version of \cite[Lemma 4]{HRS}.
\begin{proposition}
\label{HRSpn} Given $\vartheta >0$, let $f:\Delta \times
[0,\vartheta) \to \Delta$ be holomorphic in the first variable for
each fixed $t \in [0,\vartheta)$, and right-differentiable at $0$ in
the second variable for each fixed $\zeta \in \Delta$. Suppose also
that $f(\zeta, 0) = c \zeta$ for all $\zeta \in \Delta$ and some
$c\in (0,1]$. Then, for each $\zeta \in \Delta$,
\begin{equation}  \label{H}
\mathrm{Re} \bar{\zeta} f^{\prime }_t (\zeta , 0) \leq (1- c^2|\zeta|^2)
\mathrm{Re} \bar{\zeta} f^{\prime }_t (0 , 0),
\end{equation}
where $f^{\prime }_t (\zeta, t) = \partial f(\zeta, t)/ \partial t$.
\end{proposition}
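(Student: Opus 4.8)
The plan is to deduce inequality \eqref{H} from a single differential inequality obtained by fixing $\zeta\in\Delta$ and studying the map $g_t(\eta):=f(\eta,t)$ along a Schwarz-Pick argument, but the cleanest route is to compare the hyperbolic (Poincar\'e) metric of $\Delta$ at time $0$ with its infinitesimal decrease under the flow $t\mapsto f(\cdot,t)$. Since $f(\cdot,t)\in\mathrm{Hol}(\Delta,\Delta)$ for each fixed $t$, the Schwarz-Pick lemma gives that $f(\cdot,t)$ is nonexpansive for the Poincar\'e metric $\rho_\Delta$. The key observation is that $f(\cdot,0)=c\,\mathrm{id}$ is an explicit map whose action on $\rho_\Delta$ is computable, so differentiating the contraction estimate in $t$ at $t=0$ should produce \eqref{H}.

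First I would set up the pointwise computation. Fix $\zeta\in\Delta$. The Poincar\'e metric density on $\Delta$ is $\lambda(\eta)=1/(1-|\eta|^2)$, so the relevant quantity controlling hyperbolic distance from a fixed base point is governed by $|f(\zeta,t)|^2$. I would consider the scalar function $\varphi(t):=|f(\zeta,t)|^2$ and use that $|f(\zeta,t)|<1$ for all $t$ together with $\varphi(0)=c^2|\zeta|^2$. Right-differentiability in $t$ gives $\varphi'(0)=2\,\mathrm{Re}\bigl(\overline{f(\zeta,0)}\,f'_t(\zeta,0)\bigr)=2c\,\mathrm{Re}\bigl(\bar{\zeta}\,f'_t(\zeta,0)\bigr)$. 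So the left-hand side of \eqref{H}, multiplied by $2c$, is exactly $\varphi'(0)$. The task is therefore to bound $\varphi'(0)$ from above, and the right-hand side of \eqref{H} should emerge as the corresponding bound coming from the $t$-derivative of the Schwarz-Pick contraction.

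The heart of the matter is to invoke the invariant-form Schwarz lemma in the second variable. Because $f(\cdot,0)=c\,\mathrm{id}$, the composition $F_t:=f(\cdot,t)\circ(c\,\mathrm{id})^{-1}=f(\cdot/c,t)$ maps the disc $\Delta_c=\{|\eta|<c\}$ into $\Delta$ and satisfies $F_0=\mathrm{id}$ on $\Delta_c$; this is the standard trick of \cite[Lemma 4]{HRS}. Applying the Schwarz-Pick inequality to $f(\cdot,t)$ at the two points $\zeta$ and $0$, or equivalently differentiating $\rho_\Delta\bigl(f(\zeta,t),f(0,t)\bigr)\le\rho_\Delta(c\zeta,0)$ in $t$ at $t=0$, I expect to obtain a relation of the form $\mathrm{Re}\bar\zeta f'_t(\zeta,0)-(\text{term in }f'_t(0,0))\le -(\text{nonnegative})$, where the multiplicative factor $(1-c^2|\zeta|^2)$ on the right of \eqref{H} is precisely the Jacobian weight $1-|f(\zeta,0)|^2=1-c^2|\zeta|^2$ coming from the Poincar\'e density evaluated at $f(\zeta,0)=c\zeta$.

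The main obstacle, and the step to carry out carefully, is the justification of differentiating the Schwarz-Pick inequality in $t$: the inequality $\rho_\Delta(f(\zeta,t),f(0,t))\le\rho_\Delta(f(\zeta,0),f(0,0))$ holds \emph{with equality at $t=0$} (since $f(\cdot,0)=c\,\mathrm{id}$ and a dilation is a hyperbolic isometry only onto its image, not on all of $\Delta$). I would therefore rewrite the contraction not as distance-nonincreasing but through the explicit formula for the pseudohyperbolic quantity $\bigl|(f(\zeta,t)-f(0,t))/(1-\overline{f(0,t)}f(\zeta,t))\bigr|^2\le|c\zeta|^2$, which holds for all $t$ with equality at $t=0$, so that its right $t$-derivative at $0$ is $\le 0$. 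Expanding this derivative using $f(0,0)=0$, $f(\zeta,0)=c\zeta$ and collecting the terms $f'_t(\zeta,0)$ and $f'_t(0,0)$ yields \eqref{H} after dividing by $2c>0$; the factor $(1-c^2|\zeta|^2)$ arises from differentiating the denominator $1-\overline{f(0,t)}f(\zeta,t)$ and the boundary contribution, so the bookkeeping of these two contributions is where the real work lies.
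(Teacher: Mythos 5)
Your plan is the standard proof of \cite[Lemma 4]{HRS}, and it is correct precisely in the case $c=1$: there, the Schwarz--Pick inequality
\begin{equation*}
\left|\frac{f(\zeta,t)-f(0,t)}{1-\overline{f(0,t)}\,f(\zeta,t)}\right|^2\ \leq\ |\zeta|^2
\end{equation*}
holds for all $t$ \emph{with equality at} $t=0$, so its right $t$-derivative at $0$ is $\leq 0$, and expanding that derivative gives exactly \eqref{H}. The gap is your pivotal claim for general $c$: you assert the stronger bound with right-hand side $|c\zeta|^2$, ``which holds for all $t$ with equality at $t=0$.'' Schwarz--Pick does not give this: the constant $c$ enters the \emph{left-hand side} at $t=0$, not the bound, which remains $|\zeta|^2$. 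When $c<1$ the correct inequality is therefore slack at $t=0$ (so differentiating it yields nothing), and your strengthened inequality is simply false: for $f(\eta,t)=(c+t(1-c))\eta$, each $f(\cdot,t)$ is a holomorphic self-map of $\Delta$ with $f(\cdot,0)=c\,\mathrm{id}$, yet the pseudohyperbolic quantity equals $(c+t(1-c))^2|\zeta|^2>c^2|\zeta|^2$ for $t>0$. The same obstruction defeats your rescaling trick $F_t=f(\cdot/c,t)$: $F_t$ maps $\Delta_c$ into the strictly larger disc $\Delta$, and the inclusion $\Delta_c\hookrightarrow\Delta$ is a strict hyperbolic contraction, so again no equality at $t=0$ is available to differentiate.

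Moreover, this gap cannot be closed from the stated hypotheses at all, because the proposition as literally stated fails for every $c\in(0,1)$: the family $f(\eta,t)=(c+t(1-c))\eta$ above satisfies all the hypotheses, but $f'_t(\zeta,0)=(1-c)\zeta$ and $f'_t(0,0)=0$, so \eqref{H} would read $(1-c)|\zeta|^2\leq 0$, which is false for $\zeta\neq 0$. Consistently with this, the paper itself offers no proof of the proposition: it invokes it as a ``version of'' \cite[Lemma 4]{HRS}, where the result is established only for $c=1$ --- the one case in which your differentiated Schwarz--Pick argument (and theirs) goes through. Any valid proof of what is actually used in Lemma \ref{H2lm} (where $c=\|x\|<1$) would have to exploit additional structure of the specific family $f$ constructed there, not merely the hypotheses listed in the proposition; so the missing ingredient in your write-up is not bookkeeping but a genuinely absent (indeed, impossible) step.
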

In the sequel, by ${\hbox{{\it w}-$\lim$}}$ we denote  the limit in the weak topology of $X$.
\begin{lemma}
\label{H2lm} Let $D\subset B$ be balanced and dense and let $h:D \to X$ be $%
\omega$-pseudo-contractive for some $\omega \in \mathbb{R}$. Suppose
also that, for each $x\in D$, the following holds
\begin{equation}  \label{d5A}
\underset{\alpha \to 0^{+}}{\hbox{w-$\lim$}} \frac{ \Phi_\alpha (x) - x}{%
\alpha} = h(x) - \omega x.
\end{equation}
Then $h$ is $\omega$-dissipative.
\end{lemma}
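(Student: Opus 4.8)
The plan is to prove $\omega$-dissipativity directly from Definition \ref{disdf} by extracting the required inequality (\ref{3b}) from the weak-limit hypothesis (\ref{d5A}). Fix $x\in D$ with $\|x\|$ close to $1$, and fix $x^*\in J(x)$, so that $\langle x,x^*\rangle=\|x\|^2$ and $\|x^*\|=\|x\|$. The functional $x^*$ is norm-continuous, hence weakly continuous, so applying it to (\ref{d5A}) gives
\begin{equation*}
\lim_{\alpha\to 0^+}\frac{\langle \Phi_\alpha(x),x^*\rangle-\langle x,x^*\rangle}{\alpha}=\langle h(x),x^*\rangle-\omega\|x\|^2.
\end{equation*}
Taking real parts, the quantity I must bound is $\mathrm{Re}\langle h(x),x^*\rangle-\omega\|x\|^2$, and it equals the right-derivative at $\alpha=0$ of $\alpha\mapsto\mathrm{Re}\langle\Phi_\alpha(x),x^*\rangle$. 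So everything reduces to controlling how $\mathrm{Re}\langle\Phi_\alpha(x),x^*\rangle$ grows in $\alpha$.

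First I would exploit that $\Phi_\alpha\in\mathrm{Hol}(B,D)\subset\mathrm{Hol}(B)$ maps $B$ into $B$, together with the balanced structure of $D$, to reduce to a one-dimensional problem amenable to Proposition \ref{HRSpn}. The natural device is to restrict to the complex disk $K(x)=\{\zeta x:\zeta\in\Delta\}$: define, for $\zeta\in\Delta$ and small $t\geq 0$, the scalar function
\begin{equation*}
f(\zeta,t)=\frac{1}{\|x\|^2}\,\langle \Phi_t(\zeta x/\|x\|\cdot\|x\|),x^*\rangle,
\end{equation*}
or more cleanly a normalization $g(\zeta,t)=\langle\Phi_t(\zeta u),x^*\rangle/\langle u,x^*\rangle$ where $u=x/\|x\|$ is a unit vector and $\zeta$ ranges so that $\zeta u$ traces the complex line through $x$. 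The point of the normalization is to arrange $f(\zeta,0)=c\zeta$ with $c\in(0,1]$: since $\Phi_0=I$, at $t=0$ the map is just $\zeta\mapsto\langle\zeta u,x^*\rangle=\zeta\|x\|$, so after rescaling $\zeta$ by $\|x\|$ one gets the linear normalization required by the hypothesis of the Proposition. One must check holomorphy in $\zeta$ (inherited from $\Phi_t\in\mathrm{Hol}(B)$ composed with the affine embedding of $\Delta$) and right-differentiability in $t$ at $0$ (this is precisely what (\ref{d5A}) supplies, after pairing with $x^*$).

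Having set up $f$ so that Proposition \ref{HRSpn} applies, I would evaluate the resulting inequality (\ref{H}) at the specific point $\zeta_0$ corresponding to $x$ itself, namely where $\zeta_0 u$ equals $x$, i.e. $\zeta_0=\|x\|$ in the normalized coordinate. The inequality (\ref{H}) then reads
\begin{equation*}
\mathrm{Re}\,\bar\zeta_0 f'_t(\zeta_0,0)\leq (1-c^2|\zeta_0|^2)\,\mathrm{Re}\,\bar\zeta_0 f'_t(0,0),
\end{equation*}
and translating back through the normalization, the left side becomes (a positive multiple of) $\mathrm{Re}\langle h(x),x^*\rangle-\omega\|x\|^2$, while the factor $1-c^2|\zeta_0|^2$ becomes $1-\|x\|^2$ and the right-hand derivative $f'_t(0,0)$ becomes the value of the same right-derivative \emph{at the center}, which is a constant $\varsigma$ (independent of $x$, arising from the behavior of $\Phi_\alpha$ near $0$). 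This yields exactly
\begin{equation*}
\mathrm{Re}\langle h(x),x^*\rangle-\omega\|x\|^2\leq\varsigma(1-\|x\|^2),
\end{equation*}
which rearranges to (\ref{3b}). \textbf{The main obstacle} I anticipate is the bookkeeping of the normalization: ensuring that $c$ genuinely lands in $(0,1]$ (it should equal $\|x\|$ or $1$ depending on scaling, and one must avoid $c=0$, which is why $x\neq 0$ and the density/balanced hypotheses matter), that the identification of $f'_t(0,0)$ produces a constant $\varsigma$ uniform over the relevant shell $1-\varepsilon<\|x\|<1$, and that the target point $\zeta_0$ lies in $\Delta$ (which forces the restriction $\|x\|<1$, consistent with Definition \ref{disdf}). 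The role of the earlier lemmas is to guarantee the setup is legitimate: Lemma \ref{D1lm} secures $\Phi_0=I$ via density, so the $t=0$ slice is linear as needed, and the $\omega$-pseudo-contractivity gives $\Phi_t(B)\subset D\subset B$ so the target disk $\Delta$ of the Proposition is respected.
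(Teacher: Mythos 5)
Your overall strategy coincides with the paper's: restrict to the complex disk through $x$, pair with $x^*\in J(x)$ to get a scalar function of $(\zeta,t)$, apply Proposition \ref{HRSpn}, and read off \eqref{3b} at a real value of $\zeta$. The paper's proof does exactly this (with a harmless reparametrization $\alpha_t=t/(1+\omega t)$). However, your execution has a genuine gap at the decisive point, namely the interplay between the normalization and the evaluation point. If you parametrize by the unit vector $u=x/\|x\|$, say $g(\zeta,t)=\langle\Phi_t(\zeta u),x^*\rangle/\|x\|$, so that $c=1$ and the target point is $\zeta_0=\|x\|\in\Delta$, then Proposition \ref{HRSpn} demands right-differentiability of $t\mapsto g(\zeta,t)$ at $t=0$ for \emph{every} $\zeta\in\Delta$. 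That differentiability is supplied only by hypothesis \eqref{d5A}, which is assumed only at points of $D$; balancedness gives $\zeta u=(\zeta/\|x\|)\,x\in D$ only for $|\zeta|\le\|x\|$ (a balanced set is closed under scaling \emph{down}, never up), so on the annulus $\|x\|<|\zeta|<1$ the point $\zeta u$ need not lie in $D$, and nothing in the hypotheses yields the needed limit: indeed $(\Phi_t(\zeta u)-\zeta u)/t=h(\Phi_t(\zeta u))-\omega\zeta u$, and weak convergence of $h(\Phi_t(\zeta u))$ is precisely what is unknown off $D$. So the Proposition cannot legitimately be applied to your $g$ on $\Delta$.

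If you instead rescale to the parametrization $\zeta\mapsto\zeta x$ (so $c=\|x\|$, which is what ``rescaling $\zeta$ by $\|x\|$'' actually produces), the differentiability hypothesis does hold for all $\zeta\in\Delta$, but then the point corresponding to $x$ itself is $\zeta_0=1\notin\Delta$: you may only evaluate \eqref{H} at $\zeta=\bar\zeta=s<1$, obtaining $\mathrm{Re}\langle h(sx),x^*\rangle\le\omega s\|x\|^2+(1-s^2\|x\|^2)\,\mathrm{Re}\langle h(0),x^*\rangle$, and you must then pass to the limit $s\to1^-$. This is exactly the paper's route, and it is the step your proposal claims to bypass by evaluating at an interior point. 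Your write-up conflates the two normalizations --- asserting simultaneously the rescaled $c$ and the interior evaluation point $\zeta_0=\|x\|$ --- which is not possible; one of the two obstructions (missing $t$-differentiability on an annulus, or an evaluation point on $\partial\Delta$) always remains. The secondary issues you flag are indeed minor and fixable (for instance, $\varsigma$ is not literally the constant $f'_t(0,0)$, which depends on $x^*$, but one may take $\varsigma=\|h(0)\|$ since $\mathrm{Re}\langle h(0),x^*\rangle\le\|h(0)\|\,\|x\|$; and holomorphy in $\zeta$ is clear because $\Phi_t\in\mathrm{Hol}(B,D)$). The normalization/evaluation conflict, by contrast, is a real gap, not bookkeeping.
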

\begin{proof}
For $t>0$, we set $\alpha_t = t/(1+\omega t)$, and let
$\vartheta_\delta>0$ be such that $\alpha_t \in [0, \delta)$ for
$t\in [0, \vartheta_\delta)$. Fix $x \in D\setminus \{0\}$ and let
$\zeta \in \Delta$. Then set $u_t = (1+\omega t) \zeta x$ where
$t\in [0, \vartheta_\delta)$ is such that $u_t \in B$. By (\ref{d3})
and (\ref{d4}) and by Corollary \ref{G1cor} we obtain
\begin{equation*}
\rho(\Phi_{\alpha_t} (u_t),\zeta x) \leq t \|h(\zeta x)\| c (\zeta x),
\end{equation*}
which holds for some positive $c(\zeta x)$. Note that $\zeta x \in
D$ since $D$ is balanced. We conclude that, for each $r\in (\|x\|,
1)$, there exists $\vartheta_r < \vartheta_\delta$ such that
\begin{equation*}  
\| \Phi_{\alpha_t} (u_t) \| \leq r, \qquad \mathrm{whenever} \ \ t\in [0,
\vartheta_r].
\end{equation*}
Since $\Phi_{\alpha_t} \in \mathrm{Hol}(B)$,
\begin{equation}  \label{7a}
y_t (\zeta) := \Phi_{\alpha_t} (u_t) = \zeta x + \frac{t}{ 1 + \omega t} h(
y_t (\zeta)), \quad y_0 (\zeta) = \zeta x
\end{equation}
defines a holomorphic map $\Delta \ni \zeta \mapsto y_t(\zeta) \in B
\subset X$ for $t\in [0,\vartheta_r]$. At the same time, by
\eqref{d5A} the map $t \mapsto y_t(\zeta) \in B \subset X$ has a
one-sided weak derivative at $t=0^+$. For $x^* \in J(x)$, let us
consider
\begin{equation}  \label{8}
f (\zeta, t) = \frac{1}{\|x\| (1+ \omega t)}\langle y_t (\zeta), x^* \rangle
.
\end{equation}
For each $t\in [0,\vartheta_r]$, it is a holomorphic function on $\Delta$.
By (\ref{7a}), $f (\zeta, 0) = \zeta \|x\|$, and
\begin{equation*}
| f (\zeta, t)| < \frac{\|y_t (\zeta)\|}{|1+\omega t|} \leq \frac{ r }{%
|1+\omega t|}.
\end{equation*}
Thus, if $\omega \geq 0$, $|f(\zeta, t)|\leq r <1$ for all $t\in [0,
\vartheta_r]$, whereas if $\omega <0$, $|f(\zeta, t)| <1$ for sufficiently
small $t$. Hence, for such $t$, $f(\cdot, t)$ maps $\Delta$ into itself. For
each fixed $\zeta \in \Delta$, $f(\zeta, t)$ has a one-sided derivative at $%
t=0^+$. A direct computation from (\ref{7a}) and (\ref{8}) yields
\begin{equation*}
f^{\prime }_t (\zeta, 0) = [- \omega \zeta \|x\|^2 + \langle h(\zeta x), x^*
\rangle ]/ \|x\|.
\end{equation*}
Applying this in (\ref{H}) with $c = \|x\|$ and $\zeta = \bar{\zeta}
= s \in (0,1)$, we then obtain
\begin{eqnarray*}
\mathrm{Re} \langle h(s x) , x^* \rangle \leq \omega s \|x\|^2 + (1- s^2
\|x\|^2)\mathrm{Re} \langle h(0) , x^* \rangle,
\end{eqnarray*}
which holds for all $s < 1$. Thus, in the limit for $s\to 1^-$ we
get (\ref{3b}).
\end{proof}
\begin{proof}[Proof of Theorem \protect\ref{g1tm}]
By Lemma \ref{H2lm}, it is enough to show that \eqref{d5A} holds for every $%
x\in D$. By (\ref{d3}), (\ref{d4}) and Corollary \ref{G1cor}, for
sufficiently small positive $\alpha$,
\begin{equation}  \label{New}
\| (\Phi_\alpha (x) - x)/ \alpha \| \leq \frac{c_2 (x,r) }{c_1 (x,r)( 1 -
\alpha \omega)} \| \omega x - h(x)\|.
\end{equation}
If $X$ is reflexive, this estimate implies that the set
\begin{equation*}
\{(\Phi_\alpha (x) - x)/ \alpha \subset X: \alpha \in (0, \delta)\}
\end{equation*}
is relatively weakly compact. For every sequence $\{\alpha_n\}_{n\in \mathbb{%
N}}$, converging to $0$ as $n\to \infty$, $\Phi_{\alpha_n} (x) \to x$ by
Lemma \ref{D1lm}. By (\ref{4}),
\begin{equation*}  
\frac{\Phi_{\alpha_n} (x) - x}{ \alpha_n} = h(\Phi_{\alpha_n}(x)) - \omega x.
\end{equation*}
The assumed closedness (in the weak topology) of $h$, the weak
compactness just mentioned, and the fact that reflexive Banach
spaces are weakly sequentially complete then yield: for each
$\alpha_n \to 0^+$, the sequence $\{ (\Phi_{\alpha_n} (x) - x)/
\alpha_n \}_{n\in \mathbb{N}}$ converges weakly to the right-hand
side of (\ref{d5A}).

If $X$ is not reflexive but it is sequentially complete, $D$ is
balanced and $h$ is bounded on each $K(x)$, $x\in D$, we use the
following arguments. Take arbitrary $y^*\in X^*$ and consider
\begin{eqnarray}  \label{D8}
f_\alpha(\zeta)& := & \bigg{\langle } \frac{1}{\alpha}[\Phi_{\alpha} (\zeta
x) - \zeta x] , y^*\bigg{\rangle} \\[.2cm]
& = & - \zeta\omega \langle x, y^* \rangle + \langle h(\Phi_{\alpha}(\zeta
x)), y^* \rangle, \quad \zeta \in \Delta, \ \alpha \in (0,\delta].  \notag
\end{eqnarray}
For each $\alpha \in (0,\delta)$, $f_\alpha \in \mathrm{Hol} (\Delta,
\mathbb{C})$. For a fixed $\zeta \in \Delta$, the first line in (\ref{D8})
can be estimated by means of (\ref{New}), from which it follows that
\begin{equation*} 
| f_\alpha (\zeta)| \leq \frac{\|y^*\|c_2(\zeta x,r)}{c_1(\zeta x,r) c_3
(\delta \omega)} \| \zeta \omega x - h(\zeta x)\|,
\end{equation*}
where $c_3(\delta \omega)=\min\{ 1, (1- \delta \omega)\}$. Note that
$c_2(\zeta x,r)/c_1(\zeta x,r)$ can be estimated on each
 $K(x)$. Thus, in view of the assumed boundedness
of $h$ on the sets  $K(x)$, the family $\{ f_\alpha: \alpha \in (0,
\delta)\}$ is bounded uniformly on compact subsets of $\Delta$. By
Montel's theorem this family contains a sequence
$\{f_{\alpha_n}\}_{n\in \mathbb{N}}$, which converges as $n \to
\infty$, uniformly on compact subsets of $\Delta$ to some $f\in
\mathrm{Hol}(\Delta, \mathbb{C})$. Then by (\ref{D8}) we get that
the sequence $\{ \langle h(\Phi_{\alpha_n}(\zeta x)), y^*
\rangle\}_{n\in \mathbb{N}}$ converges to $f(\zeta) + \zeta \omega
\langle x, y^*\rangle$. Now we use the convergence in (\ref{gG}),
the weak sequential completeness of $X$, and the closedness of $h$
and conclude that
\begin{equation*}  
\langle h(\Phi_{\alpha_n}(\zeta x)), y^* \rangle \to \langle h(\zeta x), y^*
\rangle, \qquad n\to \infty,
\end{equation*}
for all $y^*\in X^*$.
\end{proof}
\begin{remark}
\label{Newrk} From the previous proof it follows that, for weakly
sequentially complete spaces $X$ and maps $h$ as in Theorem
\ref{g1tm}, bounded on each $K(x)$,  the map $\zeta \mapsto h(\zeta
x)$ is in $\mathrm{Hol} (\Delta , X)$ for each $x\in D$.
\end{remark}

\subsection{Dissipative and pseudo-contractive holomorphic maps}

\label{Subholo}

The following statement, see \cite[Theorem 1]{HRS} is an extension
of the well-known Lumer--Phillips
theorem, see e.g., \cite[p. 30]{Bonsal}.
\begin{proposition}
\label{Hpn} Given $h\in \mathrm{Hol}(B,X)$, it follows that $L(h) \leq 0$ if and only if,
for each $t>0$, $(I - t h)(B) \supseteq B$ and $(I - t h)^{-1} \in
\mathrm{Hol} (B)$.
\end{proposition}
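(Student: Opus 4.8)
The plan is to read the statement as a nonlinear Lumer--Phillips theorem and to prove the two implications separately, after one preliminary observation: the resolvent $(I-th)^{-1}$ is exactly the Abel average $\Phi_t$ of \eqref{4} with $\omega=0$, so the right-hand condition asserts that $h$ is $0$-pseudo-contractive \emph{with the self-map property holding for every} $t>0$, not merely for small $t$. I would exploit two mechanisms already visible in the excerpt: for one direction, the a priori estimate obtained by pairing the resolvent equation $x-th(x)=y$ with a supporting functional $x^*\in J(x)$; for the other, the passage from a holomorphic self-map of $B$ to a numerical-range bound via Proposition \ref{HRSpn}, exactly as in the proof of Lemma \ref{H2lm}.

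For the implication $L(h)\le 0\Rightarrow$ resolvent property, I would first read off from \eqref{H2} the boundary dissipativity $\limsup_{\|x\|\to 1^-}\mathrm{Re}\langle h(x),x^*\rangle\le 0$. The key a priori estimate is then the following: if $x\in B$ solves $x-th(x)=y$ with $y\in B$, pairing with $x^*\in J(x)$ and taking real parts gives $\|x\|^2\le \|y\|\,\|x\| + t\,\mathrm{Re}\langle h(x),x^*\rangle$, and the boundary dissipativity forces $\|x\|$ to stay bounded away from $1$ by a constant depending only on $\|y\|$ and $t$; hence every solution lies in $B$, so the inverse, wherever it exists, is a self-map of $B$. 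For small $t$ the equation $x=y+th(x)$ is solved by the Earle--Hamilton fixed point theorem applied to $G(x)=y+th(x)$ on a ball $\bar B_r$ with $\|y\|<r<1$ (using boundedness of $h$ on $\bar B_r$), and holomorphic dependence on $y$ yields $(I-th)^{-1}\in\mathrm{Hol}(B)$. To reach all $t>0$ I would run a method-of-continuity argument: the set of admissible $t$ is nonempty, open (inverse function theorem, since $I-t h'(x)$ is invertible), and closed in $(0,+\infty)$ (extract limits of the uniformly bounded resolvents by Montel's theorem, the a priori bound guaranteeing that the limit still maps $B$ into $B$ and that $(I-th)(B)$ still contains $B$). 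The hard part is precisely this globalization: the a priori estimate confines solutions to $B$ but does not by itself produce them for large $t$, and verifying that surjectivity onto $B$ and injectivity both survive in the limit is the delicate step where $L(h)\le 0$ must be used in full.

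For the converse, assume $R_t:=(I-th)^{-1}\in\mathrm{Hol}(B)$ and $(I-th)(B)\supseteq B$ for every $t>0$; then $h$ is $0$-pseudo-contractive with $D=B$ and $\Phi_t=R_t$. Since $h\in\mathrm{Hol}(B,X)$ is norm-continuous, Lemma \ref{D1lm} gives $\Phi_t(x)\to x$, and then $(\Phi_t(x)-x)/t=h(\Phi_t(x))\to h(x)$ in norm, so the hypothesis \eqref{d5A} of Lemma \ref{H2lm} holds (with $\omega=0$); as $B$ is balanced and dense in itself, Lemma \ref{H2lm} yields that $h$ is $0$-dissipative. Finally, specializing \eqref{3b} to $x=sy$ with $y\in\partial B$ and the functional $sy^*\in J(sy)$ (for $y^*\in J(y)$) and letting $s\to 1^-$ gives $\limsup_{s\to 1^-}\mathrm{Re}\langle h(sy),y^*\rangle\le 0$, that is $L(h)\le 0$ by \eqref{H2}. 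Alternatively, this direction can be carried out without Lemma \ref{H2lm} by feeding the complex slice $\zeta\mapsto R_t(\zeta x)$, suitably normalized and paired with $x^*\in J(x)$, into Proposition \ref{HRSpn} and letting $t\to 0^+$, in complete parallel with the computation in the proof of Lemma \ref{H2lm}.
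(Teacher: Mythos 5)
Your proposal cannot be compared with an argument in the text, because the paper does not prove Proposition \ref{Hpn} at all: it is quoted from \cite[Theorem 1]{HRS}. Judged on its own terms, your converse direction (resolvent property $\Rightarrow L(h)\le 0$) is correct and is a nice re-use of the paper's machinery: with $D=B$ the hypotheses of Lemmas \ref{D1lm} and \ref{H2lm} are satisfied, the identity $(\Phi_t(x)-x)/t=h(\Phi_t(x))\to h(x)$ in norm gives \eqref{d5A}, and passing from \eqref{3b} to $L(h)\le 0$ is exactly the ``(i) $\Rightarrow$ (ii) is immediate'' step of Theorem \ref{2tm}; since neither lemma depends on Proposition \ref{Hpn}, there is no circularity (and you rightly avoid Proposition \ref{1pn} and Lemma \ref{1tm}, which the paper derives \emph{from} Proposition \ref{Hpn}).

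The forward direction, however, has two genuine gaps. First, your Earle--Hamilton step for small $t$ assumes $h$ is bounded on $\bar{B}_r$; for $h\in\mathrm{Hol}(B,X)$ with $\dim X=\infty$ this is false in general -- it is precisely why the paper introduces the notion of unit radius of boundedness and, in the proof of Theorem \ref{2tm}, must invoke \cite[Corollary 9]{HRS} to deduce it from $L(h)\le\omega$. Boundedness must be \emph{derived} from $L(h)\le 0$, not assumed. Second, and more seriously, the globalization to all $t>0$ -- which you yourself flag as ``the hard part'' -- is not actually carried out, and the sketch you give would fail: the closedness step of your method of continuity extracts limits of resolvents ``by Montel's theorem,'' but Montel's theorem is false for holomorphic maps valued in an infinite-dimensional Banach space (closed balls are not compact), and the openness step via the inverse function theorem produces a neighborhood of $t$ depending on $y$, which may shrink as $y\to\partial B$. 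A correct and standard globalization uses the resolvent identity instead: if $J_t=(I-th)^{-1}\in\mathrm{Hol}(B)$ and $s>t$, then $x-sh(x)=y$ is equivalent to $x=J_t\left(\frac{t}{s}\,y+\left(1-\frac{t}{s}\right)x\right)$; the affine map inside sends $B$ into the ball of radius $\frac{t}{s}\|y\|+1-\frac{t}{s}<1$, whose image under the hyperbolically nonexpansive map $J_t$ lies strictly inside $B$, so the Earle--Hamilton theorem yields a unique fixed point $J_s(y)$ depending holomorphically on $y$. Thus admissibility propagates from small $t$ (where your a priori estimate plus the boundedness just discussed do the work) to all $t>0$, with no compactness argument needed.
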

An immediate corollary of Proposition \ref{Hpn} is the following statement.
\begin{proposition}
\label{1pn} If $L(h) <0$, then, for each $y \in B_r$, $r= - L(h)$, the
equation $y = h(x)$ has a unique solution $x\in B$. In particular, $h$ has a
unique null point in $B$.
\end{proposition}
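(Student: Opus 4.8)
The plan is to reduce the solvability of $y = h(x)$ to the existence of a null point of the shifted map $g := h - y \in \mathrm{Hol}(B,X)$, and then to produce that null point as a fixed point of the resolvent supplied by Proposition \ref{Hpn}. First I would record how the numerical radius behaves under a constant shift. Since $g_s(x) = h(sx) - y$, and since every $x^* \in J(x)$ with $x \in \partial B$ has $\|x^*\| = 1$, one gets $\mathrm{Re}\langle g(sx), x^*\rangle \le \mathrm{Re}\langle h(sx), x^*\rangle + \|y\|$; taking suprema over $x \in \partial B$, $x^* \in J(x)$ and then $\limsup_{s\to 1^-}$ yields $L(g) \le L(h) + \|y\|$. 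Because $\|y\| < r = -L(h)$, this gives $L(g) < 0$, and in particular $L(g) \le 0$.

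Next I would apply Proposition \ref{Hpn} to $g$: for every $t > 0$ the map $I - tg$ satisfies $(I-tg)(B) \supseteq B$ and $\Psi_t := (I-tg)^{-1} \in \mathrm{Hol}(B)$. The point of this reformulation is that $x$ is a null point of $g$ (equivalently $h(x) = y$) if and only if $(I-tg)(x) = x$, i.e.\ if and only if $x$ is a fixed point of $\Psi_t$; thus existence and uniqueness of the solution of $y = h(x)$ are exactly existence and uniqueness of a fixed point of the holomorphic self-map $\Psi_t$ of $B$.

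The decisive step, and the one I expect to be the main obstacle, is to upgrade the \emph{strict} inequality $L(g) < 0$ to the statement that $\Psi_t$ maps $B$ into a set lying strictly inside $B$. Set $\varepsilon = -L(g)/2 > 0$; by the definition of $L(g)$ there is $s_0 \in (0,1)$ such that $\mathrm{Re}\langle g(sx), x^*\rangle \le L(g)/2 < 0$ for all $s \in (s_0,1)$, $x \in \partial B$ and $x^* \in J(x)$. For $w \in B$ with $\|w\| = s \in (s_0,1)$, writing $w = s\hat x$ with $\hat x = w/\|w\| \in \partial B$ and choosing $\hat x^* \in J(\hat x)$, I would estimate
\begin{equation*}
\|(I-tg)(w)\| \ge \mathrm{Re}\langle (I-tg)(w), \hat x^*\rangle = s - t\,\mathrm{Re}\langle g(w), \hat x^*\rangle \ge s + t\varepsilon .
\end{equation*}
Consequently, if $x \in B$ and $w = \Psi_t(x)$, then either $\|w\| \le s_0$, or $\|w\| > s_0$ and $\|x\| = \|(I-tg)(w)\| \ge \|w\| + t\varepsilon$ forces $\|w\| < 1 - t\varepsilon$; in either case $\|w\| \le \rho := \max\{s_0,\, 1 - t\varepsilon\} < 1$. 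Thus $\Psi_t(B) \subseteq \bar B_\rho$ lies at positive distance from $\partial B$.

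Finally, with $\Psi_t \in \mathrm{Hol}(B)$ and $\Psi_t(B)$ strictly inside $B$, I would invoke the Earle--Hamilton fixed point theorem (see, e.g., \cite{ReichS}) to conclude that $\Psi_t$ has a unique fixed point $x_* \in B$. Then $g(x_*) = 0$, that is $h(x_*) = y$, and since every solution of $h(x) = y$ is a fixed point of $\Psi_t$, it is the unique one. The last assertion, that $h$ has a unique null point in $B$, is then the special case $y = 0$, which is admissible because $0 \in B_r$ as soon as $r = -L(h) > 0$.
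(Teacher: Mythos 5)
Your proof is correct, and it rests on exactly the pillar the paper itself points to: the paper gives no written proof of Proposition \ref{1pn}, stating only that it is an immediate corollary of Proposition \ref{Hpn}. Your argument is the natural way to make that claim rigorous. You pass to the shifted map $g=h-y$ and verify $L(g)\le L(h)+\|y\|<0$ (valid since functionals in $J(x)$, $x\in\partial B$, have norm one); you apply Proposition \ref{Hpn} to obtain the holomorphic resolvent $\Psi_t=(I-tg)^{-1}\in\mathrm{Hol}(B)$, whose fixed points are precisely the solutions of $h(x)=y$; and then you use the strictness of the inequality $L(g)<0$ to get the boundary estimate $\|(I-tg)(w)\|\ge\|w\|+t\varepsilon$ for $s_0<\|w\|<1$, whence the case split $\|w\|\le s_0$ versus $\|w\|>s_0$ correctly yields $\Psi_t(B)\subseteq\bar{B}_\rho$ with $\rho=\max\{s_0,1-t\varepsilon\}<1$. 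At that point the Earle--Hamilton fixed point theorem (available in \cite{ReichS}) gives existence and uniqueness of the fixed point, hence of the solution, and the null-point statement is the case $y=0$. The only difference from the paper is that you supply the quantitative ``maps strictly inside'' step and the appeal to Earle--Hamilton; this extra machinery is precisely the content hidden behind the paper's word ``immediate,'' since $L(g)\le 0$ alone would not suffice (resolvents can then fail to have fixed points, as with generators without interior stationary points), so the strict negativity had to be exploited somewhere, and you do so correctly.
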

\begin{lemma}
\label{1tm} Given $\omega\in \mathbb{R}$ and  $h\in \mathrm{Hol}(B,X)$, assume that
$L(h) \leq \omega$. Let also $r>0$ and $\lambda \in \mathbb{C}$ be
such that $\mathrm{Re} \lambda > \omega + r$. Then,  for each $y \in \bar{B}_r$, the equation $\lambda x -
h(x) = y$ has a unique solution in $B$.
\end{lemma}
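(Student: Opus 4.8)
The plan is to reduce the statement to Proposition \ref{1pn} by absorbing the linear term $\lambda I$ into $h$. Put $g := h - \lambda I$, which again belongs to $\mathrm{Hol}(B,X)$, and note that $\lambda x - h(x) = y$ is equivalent to $g(x) = -y$. Since $\|-y\| = \|y\| \le r$, the asserted existence and uniqueness of $x \in B$ will follow once I know that $g$ assumes every value of norm $\le r$ exactly once on $B$, which is precisely the content of Proposition \ref{1pn} applied to $g$ --- provided the numerical radius $L(g)$ is sufficiently negative.

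The key step is therefore the identity $L(g) = L(h) - \mathrm{Re}\,\lambda$. To obtain it, fix $s \in (0,1)$, a boundary point $x \in \partial B$, and $x^* \in J(x)$. Because $\|x\| = 1$ gives $\langle x, x^*\rangle = \|x\|^2 = 1$, the maps $g_s$ and $h_s$ satisfy $\langle g_s(x), x^*\rangle = \langle h(sx), x^*\rangle - s\lambda \langle x, x^*\rangle = \langle h(sx), x^*\rangle - s\lambda$, so that $\mathrm{Re}\,\langle g_s(x), x^*\rangle = \mathrm{Re}\,\langle h_s(x), x^*\rangle - s\,\mathrm{Re}\,\lambda$. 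The subtracted quantity $s\,\mathrm{Re}\,\lambda$ does not depend on $x$ or $x^*$, hence it passes unchanged through the supremum defining $\mathrm{Re}\,N(g_s)$ and, letting $s \to 1^-$ in the $\limsup$, yields $L(g) = L(h) - \mathrm{Re}\,\lambda$. Invoking the hypotheses $L(h) \le \omega$ and $\mathrm{Re}\,\lambda > \omega + r$, I conclude that $L(g) \le \omega - \mathrm{Re}\,\lambda < -r < 0$.

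It remains to apply Proposition \ref{1pn} to $g$. Writing $r_0 := -L(g)$, the previous estimate gives $r_0 > r$, so $\bar{B}_r \subset B_{r_0}$. By Proposition \ref{1pn}, for every $z \in B_{r_0}$ the equation $g(x) = z$ has a unique solution $x \in B$. For any $y \in \bar{B}_r$ the point $z = -y$ lies in $\bar{B}_r \subset B_{r_0}$, so there is a unique $x \in B$ with $g(x) = -y$, that is, $\lambda x - h(x) = y$. This is exactly the assertion of Lemma \ref{1tm}.

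The only step requiring genuine care is the computation of $L(g)$: one must read $\mathrm{Re}\,N(\cdot)$ as the supremum of the real parts of the numerical range, and then check that the additive, $(x,x^*)$-independent shift $-s\,\mathrm{Re}\,\lambda$ survives both the supremum and the $\limsup$ (tending to $-\mathrm{Re}\,\lambda$ as $s\to 1^-$). The holomorphy of $g$ and the strict inequality $r_0 > r$ that guarantees $\bar{B}_r \subset B_{r_0}$ are then routine.
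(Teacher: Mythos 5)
Your proof is correct and follows essentially the same route as the paper: both arguments reduce the claim to Proposition \ref{1pn} by absorbing the linear term $\lambda I$ into the map and estimating the numerical radius via $\mathrm{Re}\langle h(sx),x^*\rangle - s\,\mathrm{Re}\lambda$. The only (immaterial) difference is that the paper folds $y$ into the auxiliary map, setting $g(x) = y - \lambda x + h(x)$ and using the unique-null-point part of Proposition \ref{1pn}, whereas you keep $g = h - \lambda I$ independent of $y$ and invoke the solvability statement on the ball $B_{-L(g)}$.
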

\begin{proof}
For a given $y\in \bar{B}_r$, the null points of $g(x) := y - \lambda x +
h(x)$ solve the equation in question. For $x\in \partial B$, $x^* \in J(x)$,
and $s\in (0,1)$, we obtain
\begin{equation*}
\mathrm{Re} \langle g(s x), x^* \rangle = \mathrm{Re} \langle y, x^* \rangle
- s \mathrm{Re} \lambda + \mathrm{Re}\langle h(s x), x^* \rangle .
\end{equation*}
Thus, by (\ref{H2}),
\begin{equation*}
\limsup_{s\to 1^-} \mathrm{Re}\langle g(sx), x^* \rangle \leq \|y\| -
(\mathrm{Re} \lambda - \omega) \leq r - (\mathrm{Re} \lambda - \omega) < 0.
\end{equation*}
Hence, the result follows by Proposition \ref{1pn}.
\end{proof}
The next statement was proven in \cite[Theorem 1.5]{BES}.
\begin{proposition}
\label{BESpn} Let $h\in \mathrm{Hol}(B,X)$ be $\omega$-dissipative.
Then, for each $x \in B$, it follows that
\begin{equation*}  
\|h(x) - h(0) \| \leq \frac{\varkappa_h \|x\|}{1 - \|x\|^2} + 4 \|h(0) \|
\|x\|^2,
\end{equation*}
where the constant $\varkappa_h>0$ can be calculated explicitly. In
particular, $h$ has unit radius of boundedness.
\end{proposition}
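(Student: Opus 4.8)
The plan is to first control the numerical range of $h$ at the interior points of $B$ and then to convert this one-sided information into the two-sided norm bound. Since $h$ is $\omega$-dissipative, the defining inequality \eqref{3b} gives at once $L(h)\le\omega$; in particular, by Lemma \ref{1tm} the Abel averages $\Phi_\alpha$ of \eqref{4} are well defined as holomorphic self-maps of $B$ for all small $\alpha>0$, a fact I return to below. Fix $x\in B\setminus\{0\}$, set $u=x/\|x\|\in\partial B$, and restrict attention to the complex line $K(u)=\{\zeta u:\zeta\in\Delta\}$. For a supporting functional $x^*\in J(u)$ the slice function $p(\zeta)=\langle h(\zeta u),x^*\rangle$ is holomorphic on $\Delta$, and, since $\bar\zeta x^*\in J(\zeta u)$, evaluating \eqref{3b} at the point $\zeta u$ with this functional yields $\mathrm{Re}\,(\bar\zeta\,p(\zeta))\le\omega|\zeta|^2+\varsigma(1-|\zeta|^2)$ for all $\zeta$ with $|\zeta|$ close to $1$.

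Next I would globalise this radial bound. On the circle $|\zeta|=\rho$ one has $\bar\zeta=\rho^2/\zeta$, whence $\mathrm{Re}\,(\bar\zeta p(\zeta))=\rho^2\,\mathrm{Re}\,(p(\zeta)/\zeta)$, so the function $\Psi(\zeta)=(p(\zeta)-p(0))/\zeta$, holomorphic on all of $\Delta$ after removing the apparent singularity, satisfies $\mathrm{Re}\,\Psi(\zeta)\le\omega+\varsigma(1-\rho^2)/\rho^2+|p(0)|/\rho$ on that circle. Because $\mathrm{Re}\,\Psi$ is harmonic, the maximum principle propagates the bound inward, and letting $\rho\to1^-$ gives $\mathrm{Re}\,\Psi(\zeta)\le\omega+\|h(0)\|$ throughout $\Delta$ (here $|p(0)|=|\langle h(0),x^*\rangle|\le\|h(0)\|$). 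A Borel--Carath\'eodory (equivalently Harnack) estimate for the holomorphic function $\Psi$, whose real part is bounded above, then controls $|\Psi(r)|$ and hence $|p(r)-p(0)|=r\,|\Psi(r)|$; since $p(r)-p(0)=\langle h(x)-h(0),x^*\rangle$, this produces a bound of the shape $|\langle h(x)-h(0),x^*\rangle|\le \varkappa_h\|x\|/(1-\|x\|^2)+4\|h(0)\|\,\|x\|^2$, with $\varkappa_h$ computable from $\omega$, $\varsigma$ and $\|h'(0)\|$. The hyperbolic density $1/(1-\|x\|^2)$ enters through the Schwarz--Pick geometry of $\Delta$ and the quadratic term through the contribution of $p(0)$. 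Alternatively, the same diagonal estimate follows from Proposition \ref{HRSpn} applied to the self-maps $\Phi_\alpha$ along $K(u)$, for which $c=\langle u,x^*\rangle=1$.

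The heart of the matter --- and the step I expect to be the main obstacle --- is to upgrade this estimate for the radial component $\langle h(x)-h(0),x^*\rangle$ with $x^*\in J(u)$ to the full norm $\|h(x)-h(0)\|=\sup_{\|\xi\|\le1}|\langle h(x)-h(0),\xi\rangle|$. The difficulty is genuine: slicing and the Schwarz--Pick inequality saturate only in the direction of $x$, so they bound $\langle\,\cdot\,,\xi\rangle$ only for functionals $\xi$ that support $x$, whereas the supremum defining the norm ranges over all $\xi$. To bridge this gap I would invoke the numerical-range theory of Harris \cite{H}: for holomorphic maps on $B$ the numerical radius dominates the norm up to a universal factor, the nonlinear counterpart of the operator inequality $\|T\|\le 2\,w(T)$. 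Feeding the diagonal bound of the previous paragraph into this comparison yields the asserted inequality for $\|h(x)-h(0)\|$, the constant $4$ and the explicit $\varkappa_h$ emerging from the comparison factor together with $\omega$, $\varsigma$ and $\|h'(0)\|$.

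Finally, unit radius of boundedness is a direct consequence: for $r\in(0,1)$ and $\|x\|\le r$ the right-hand side is at most $\varkappa_h\,r/(1-r^2)+4\|h(0)\|\,r^2$, so $C_h(r)\le\|h(0)\|+\varkappa_h\,r/(1-r^2)+4\|h(0)\|\,r^2<\infty$. In summary, every step except the numerical-range-to-norm passage is the one-variable theory on $\Delta$ transported to the slice $K(u)$; that passage, resting on Harris's comparison, is the only genuinely delicate ingredient.
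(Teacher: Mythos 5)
Your slice construction is correct and does what you claim it does: for $u=x/\|x\|$ and $x^*\in J(u)$ one indeed has $\bar{\zeta}x^*\in J(\zeta u)$, so \eqref{3b} gives $\mathrm{Re}\,\bigl(\bar{\zeta}\,p(\zeta)\bigr)\le\omega|\zeta|^2+\varsigma(1-|\zeta|^2)$ for $|\zeta|$ near $1$, and the maximum principle applied to $\mathrm{Re}\,\Psi$ followed by Borel--Carath\'eodory yields an explicit bound of the stated shape for $|\langle h(x)-h(0),x^*\rangle|$. But this controls the pairing of $h(x)-h(0)$ against the \emph{one} functional supporting the direction of $x$, and the proposition asserts a bound on the full norm. (For the record, the paper does not prove Proposition \ref{BESpn} at all; it quotes it from \cite[Theorem 1.5]{BES}.)

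The gap is precisely the step you flag as delicate, and it is not a technicality: the tool you invoke --- ``for holomorphic maps on $B$ the numerical radius dominates the norm up to a universal factor'' --- is not a theorem of \cite{H}, and it is false as stated. Harris's comparison results are the linear inequality $\|T\|\le e\,v(T)$ and, for $k$-homogeneous polynomials, $\|P\|\le k^{k/(k-1)}\,v(P)$, where the constant grows with $k$ and cannot be replaced by one independent of the degree; since homogeneous polynomials are particular holomorphic maps, no universal-constant comparison can hold for general $h\in\mathrm{Hol}(B,X)$. Moreover, Harris's theorems are proved for maps uniformly continuous (hence bounded) on $\bar{B}$, whereas in infinite dimensions a map in $\mathrm{Hol}(B,X)$ need not be bounded on any $\bar{B}_r$ --- boundedness is exactly the ``unit radius of boundedness'' being proven --- so applying such a comparison to $h$ or to its dilates $x\mapsto h(rx)$ presupposes the conclusion. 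The best repair along your lines (expand $h=\sum_k P_k$ in homogeneous polynomials, bound $v(P_k)$ via Carath\'eodory's coefficient lemma applied to your $\Psi$, then use Harris's degree-$k$ inequality and sum) does prove unit radius of boundedness, but only with the weaker rate $\|h(x)-h(0)\|\le C\|x\|/(1-\|x\|)^2$, not the stated $1/(1-\|x\|^2)$. The proofs in the literature (\cite{HRS}, \cite{BES}) obtain the norm bound by a different mechanism, the Lumer--Phillips-type result quoted here as Proposition \ref{Hpn}: $\omega$-dissipativity forces the resolvents $(I-t(h-\omega I))^{-1}$ to be holomorphic \emph{self-maps} of $B$, which are bounded in every direction by fiat, and the growth of $h$ is then extracted from Schwarz--Pick estimates on these self-maps. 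That self-map input, absent from your argument, is what controls the tangential components of $h(x)-h(0)$ that your radial estimate cannot see.
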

\begin{proof}[Proof of Theorem \protect\ref{2tm}]
$(i) \Rightarrow (ii)$ is immediate. \vskip.1cm $(i) \Rightarrow (iv)
\Rightarrow (iii)$: The part related to the boundedness of $h$ follows by
Proposition \ref{BESpn}. For $\alpha\neq 0, 1/\omega $ and $z \in B$,
consider
\begin{equation}  \label{6}
(I - \alpha h)(x) = (1 - \alpha \omega )z.
\end{equation}
For $\lambda = 1/\alpha$, \eqref{6} becomes $\lambda x - h(x) = y$ with $y =
(\lambda -\omega )z$. By Lemma \ref{1tm}, whenever $1/\alpha > \omega $, the
latter has a unique solution $x\in B$, which holds for each $\alpha \in
Q_\omega$, see (\ref{5}). Thus, $\Phi_\alpha$ is defined as a self-map of $B$%
. For $\omega =0$, $\Phi_\alpha\in \mathrm{Hol}(B)$ for all $\alpha \geq 0$
by Proposition \ref{Hpn}. Similarly, $\Phi_\alpha\in \mathrm{Hol}(B)$ for
all $\omega \neq 0$ and each $\alpha \in Q_\omega$. The existence of the
first limit in (\ref{H4}) follows by Lemma \ref{D1lm} as $h$ is clearly $%
\omega$-pseudo-contractive, cf. Definition \ref{gdf}. The second limit in (%
\ref{H4}) follows by the continuity of $h$. \vskip.1cm $(iii)
\Rightarrow (i) $ follows directly from Lemma \ref{H2lm}. \vskip.1cm
$(ii) \Rightarrow (iii)$: The boundedness follows by \cite[Corollary
9]{HRS}. To prove that, for some $\delta>0$, $\Phi_\alpha \in
\mathrm{Hol}(B)$ for all $\alpha \in [0,\delta)$ we let $g:= h -
\omega I$, so that $L(g) \leq 0$. Hence, by \cite[Theorem 1]{HRS},
$(I - t g)^{-1} \in \mathrm{Hol}(B)$ for all $t>0$, \textsl{i.e.},
the map $B \ni x \mapsto y \in B$ is holomorphic, where $y$ is
defined by
\begin{equation*}  
y - t g(y) = (1+ \omega t) y - t h(y) = x,
\end{equation*}
which, for $t\in [0, 1/|\omega|)$, can be rewritten as
\begin{equation*}
y_{\alpha_t} - \alpha_t h(y_{\alpha_t} ) = (1 - \alpha_t \omega ) x.
\end{equation*}
Thus, for $0<\delta < 1/|\omega|$, it follows that $\Phi_\alpha \in
\mathrm{Hol}(B)$ for all $\alpha \in[0,\delta]$.
\end{proof}

\subsection{Nonlinear Abel averages}

\label{Abel}

Let $h\in \mathrm{Hol}(B,X)$ be such that $h(0)=0$. For every nonzero $%
\lambda \notin \sigma (h)=\sigma(h^{\prime }(0))$, the set $U$ in
the definition of $\sigma(h)$ also contains $0$, and hence one can
choose $r>0$ such that $(\lambda I - h)^{-1} (\bar{B}_r) \subset B$.
Fix these $\lambda$ and $r$. Then, for $\alpha = 1/\lambda$ and all
$\omega\in \mathbb{R}$ such that $|1 / \alpha - \omega|\leq r $, the
map
\begin{equation*}
\Phi_\alpha = \left(\frac{1}{\alpha} I - h\right)^{-1} \circ \left(\frac{1}{\alpha} -
\omega\right) I
\end{equation*}
is in $\mathrm{Hol}(B)$. Note that here we do not assume that $h$ is $\omega$%
-dissipative.

Let $\mathrm{Fix} (\Phi_\alpha):=\{x\in B: \Phi_\alpha(x)=x\}$ be
the set of fixed points of $\Phi_\alpha$. Since $\Phi_\alpha (x) =
x$ is equivalent to $( I - \alpha h) (x) = (1 - \alpha \omega) x$,
we then get
\begin{equation*}  
\mathrm{Fix} (\Phi_\alpha) = \mathrm{Null}(\omega I - h)= \{ x\in B: \omega
x - h(x) = 0\}.
\end{equation*}
Recall that a subset $R\subset B$ is called a \textsl{holomorphic
retract} if there exists a \textsl{holomorphic retraction} from $B$
on $R$, {\sl i.e.}, a holomorphic self-map $\phi$ of $B$ such that
$\phi (B)=R$ and $\phi(z)=z$ for all $z\in R$. If $R$ is a
holomorphic retract of $B$ then, in particular, it is a non-singular
closed submanifold of $B$, and it is also totally geodesic with
respect to the hyperbolic metric of $B$.

Combining classical results of Koliha \cite[Theorem 0]{Koliha} and
Vesentini's \cite[Theorem 1]{Ves}, one can obtain the following
characterization of the power convergence of holomorphic maps.
\begin{proposition}
\label{kolihapn}Let $\Psi \in \mathrm{Hol}(B)$ be such that $0\in
\mathrm{Fix}(\Psi )$. Then the following statements are equivalent.
\begin{itemize}
\item[{\it (a)}]  The sequence of iterates $\{\Psi ^{n}\}_{n\in \mathbb{N}}$ converges, in the operator
norm topology,
uniformly on closed subsets of $B$, to a holomorphic retraction of $B$ onto $\mathrm{Fix} (\Psi)$.
\item[{\it (b)}] The sequence $\{(\Psi'(0))^{n}\}_{n\in \mathbb{N}}$ is convergent in the operator
norm topology.
\item[{\it (c)}] $\sigma (\Psi'(0))\subset \Delta \cup \{1\}$ and $\varsigma =1$ is at most
a simple pole of the resolvent of $\Psi'(0)$.
\end{itemize}
\end{proposition}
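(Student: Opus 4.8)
The plan is to read the three conditions as statements about two different objects---the nonlinear map $\Psi$ and its linear part $T:=\Psi'(0)$---and to bridge them. First I would record that, since $\Psi\in\mathrm{Hol}(B)$ fixes $0$, the Fr\'echet derivative $T=\Psi'(0)$ belongs to $\mathcal{B}(X)$ with $\|T\|\le 1$ (a Cauchy/Schwarz estimate: $\|\Psi'(0)\|\le r^{-1}\sup_{\|x\|=r}\|\Psi(x)\|\le r^{-1}$ and let $r\to 1^-$), and that the chain rule together with $\Psi(0)=0$ gives $(\Psi^n)'(0)=T^n$ for every $n$. Thus (b) and (c) concern only the bounded operator $T$, while (a) concerns the full iteration $\{\Psi^n\}$; the strategy is to obtain (b)$\Leftrightarrow$(c) from Koliha's theorem, (a)$\Leftrightarrow$(b) from Vesentini's theorem, and finally to identify the limiting retract.

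For (b)$\Leftrightarrow$(c) I would apply \cite[Theorem 0]{Koliha} verbatim to $T$: the powers $T^n$ converge in the operator norm topology if and only if $\sigma(T)\subset\Delta\cup\{1\}$ and $1$ is at most a simple pole of the resolvent of $T$. When convergence holds, Koliha's theorem also identifies $\lim_n T^n$ as the spectral projection $P$ of $X$ onto $\mathrm{Ker}(I-T)$ along $\mathrm{Im}(I-T)$, a fact I would keep for the retract identification. Since $\sigma(\Psi'(0))=\sigma(T)$ by definition, this is precisely the equivalence (b)$\Leftrightarrow$(c).

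For (a)$\Rightarrow$(b) I would differentiate at the fixed point $0$: if $\Psi^n\to\phi$ uniformly on closed subsets of $B$, then in particular the convergence is uniform on a small closed ball $\bar{B}_r$, so the Cauchy integral representation of the derivative passes to the limit and yields $T^n=(\Psi^n)'(0)\to\phi'(0)$ in operator norm, which is (b). The substantive direction is (b)$\Rightarrow$(a), and here I would invoke \cite[Theorem 1]{Ves}: convergence of the linear powers $T^n$ forces the nonlinear iterates $\Psi^n$ to converge uniformly on closed subsets of $B$ to a map $\phi\in\mathrm{Hol}(B)$ that is a holomorphic retraction. This is the step I expect to be the main obstacle---not because of the bookkeeping, which is routine once the hypotheses are matched, but because everything nontrivial is packed into Vesentini's estimates relating the hyperbolic geometry of $B$ to the spectral behaviour of $T$; my only genuine task is to verify that the hypothesis ``$T^n$ converges'' of Vesentini's theorem is exactly condition (b).

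It then remains to check that the limit retraction has image $\mathrm{Fix}(\Psi)$. Passing to the limit in the identities $\Psi^{n+1}=\Psi\circ\Psi^n=\Psi^n\circ\Psi$ gives $\phi=\Psi\circ\phi=\phi\circ\Psi$, whence $\phi(B)\subseteq\mathrm{Fix}(\Psi)$; conversely every $x\in\mathrm{Fix}(\Psi)$ satisfies $\Psi^n(x)=x$ for all $n$, so $\phi(x)=x$. Therefore $\phi(B)=\mathrm{Fix}(\Psi)$ with $\phi|_{\mathrm{Fix}(\Psi)}=\mathrm{id}$, i.e.\ $\phi$ is a holomorphic retraction of $B$ onto $\mathrm{Fix}(\Psi)$, as asserted in (a). Chaining the two equivalences completes the three-way equivalence.
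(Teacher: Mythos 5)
Your proposal is correct and follows essentially the same route as the paper, which states Proposition \ref{kolihapn} without further proof as a direct combination of Koliha's Theorem 0 (giving \emph{(b)}$\Leftrightarrow$\emph{(c)} for $T=\Psi'(0)$) and Vesentini's Theorem 1 (linking the nonlinear iterates to the powers of the derivative). The details you supply --- the Cauchy-integral argument for \emph{(a)}$\Rightarrow$\emph{(b)} and the identification $\phi=\Psi\circ\phi=\phi\circ\Psi$ of the limit as a retraction onto $\mathrm{Fix}(\Psi)$ --- are exactly the bookkeeping the paper leaves implicit.
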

\begin{proof}[Proof of Theorem \protect\ref{3tm}]
Note that $(\alpha I-h^{\prime }(0))$ is invertible as $1/\alpha \notin \sigma (h)=\sigma(h^{\prime
}(0))$. Since $\Phi_\alpha\in\mathrm{Hol} (B)$, we can compute its Fr{\'e}chet derivative $\Phi_\alpha^{\prime }(x)$, $x\in B$.
By the chain rule we
get from (\ref{4})
\begin{equation*}
\Phi_\alpha^{\prime }(x) - \alpha h^{\prime }( \Phi_\alpha (x))
\Phi_\alpha^{\prime }(x) = (1-\alpha \omega) I.
\end{equation*}
Next, we have $\Phi_\alpha (0) =0$ as $h(0) = 0$, and hence
\begin{equation}  \label{15}
\Phi_\alpha ^{\prime }(0) = (1 -\alpha\omega ) (I - \alpha h^{\prime}(0))^{-1}.
\end{equation}
Set
\begin{equation*}  
\psi (\zeta) = \frac{1- \alpha \omega}{1 - \alpha \zeta}, \qquad \zeta \in
\mathbb{C}.
\end{equation*}
This function is holomorphic on $\Omega(\alpha, \omega)$, and $%
\psi(\Omega(\alpha, \omega))= \Delta\cup\{1\}$. On the other hand, (\ref{15}%
) and the spectral mapping theorem imply
\begin{equation}  \label{17}
\sigma \left( \Phi_\alpha ^{\prime }(0) \right) = \sigma \left( \psi
(h^{\prime }(0))\right) = \psi\left(\sigma (h^{\prime }(0))\right).
\end{equation}
Suppose now that $\sigma (h)\subset \Omega(\alpha, \omega)$. Then, by (\ref%
{17}) it follows that
\begin{equation*}  
\sigma \left( \Phi_\alpha ^{\prime }(0) \right) \subset \Delta\cup\{1\}.
\end{equation*}
Taking into account that $1/\alpha \notin \sigma(h^{\prime }(0))$, and hence
$\mathrm{Im} ( I - \alpha h^{\prime }(0)) = X$, direct computations yield
\begin{eqnarray}  \label{19}
\mathrm{Ker} (I - \Phi_\alpha ^{\prime }(0)) & = & \mathrm{Ker} (\omega I -
h^{\prime }(0)), \\[.2cm]
\mathrm{Im} (I - \Phi_\alpha ^{\prime }(0)) & = & \mathrm{Im} (\omega I -
h^{\prime }(0)).  \notag
\end{eqnarray}
By (\ref{19}) and (\ref{13}), we then get
\begin{equation*}  
\mathrm{Ker} (I - \Phi_\alpha ^{\prime }(0)) \oplus \mathrm{Im} (I -
\Phi_\alpha ^{\prime }(0)) = X,
\end{equation*}
which means that $1$ is at most a simple pole of the resolvent of
$\Phi_\alpha ^{\prime }(0)$. Then by
Proposition \ref{kolihapn}, the sequence $\{\Phi _{\alpha }^{n}\}_{n\in \mathbb{N}}$
converges, uniformly on closed subsets of $B$, to a holomorphic retraction
$\phi_\alpha :B\rightarrow \mathrm{Fix}\Phi _{\alpha }=\mathrm{Null}(\omega I-h)$.
The converse statement  follows directly by  Proposition \ref{kolihapn}.
\end{proof}
\begin{remark}
As follows from Theorem \ref{3tm}, a \textit{necessary} condition for the
sequence $\{\Phi^n_\alpha\}_{n\in \mathbb{N}}$ to converge is that $\mathrm{%
Null}(\omega I - h)$ is a holomorphic retract of $B$.
\end{remark}

\section{Examples}

\label{examples}

\subsection{Dissipative maps}

\label{Essec}

As a typical example of a map $h:D\subset B \to X$ described by Theorem \ref%
{g1tm} we consider
\begin{equation}  \label{E1}
h = T + g,
\end{equation}
where $T:\mathcal{D}(T) \subset X \to X$ is a closed densely defined linear
operator with a nonempty resolvent set and $g\in \mathrm{Hol} (B,X)$. In
this case, $D= \mathcal{D}(T) \cap B$. As we pointed out in Section \ref{S1}, $T$ is also closed in the weak topology. Hence so is $h$. For
each $x\in B$, $h$ is bounded on compact subsets of $K(x)$. Moreover, by Remark \ref{Newrk},
the map $\zeta \mapsto h(\zeta x)$ is in $\mathrm{Hol}(\Delta, X)$.
\begin{proposition}
\label{E1as} Let $T:\mathcal{D}(T) \subset X
\to X$ be  closed and densely defined, and such that $\mathrm{Re}\langle Tx, x^ * \rangle \leq 0$
for each $x\in \mathcal{D}(T)$ and $x^*\in J(x)$. Assume
also that $g\in \mathrm{Hol}(B , B_\omega)$ for some $\omega >0$.
Then $h$ defined in (\ref{E1}) is $\omega$-dissipative.
\end{proposition}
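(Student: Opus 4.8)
The plan is to verify Definition~\ref{disdf} directly for $h=T+g$ on its domain $D=\mathcal{D}(T)\cap B$, by exhibiting an $\varepsilon>0$ and a $\varsigma\in\mathbb{R}$ for which \eqref{3b} holds. Since $\langle\,\cdot\,,x^*\rangle$ is linear, the first step is simply to split, for $x\in D$ and $x^*\in J(x)$,
\[
\mathrm{Re}\langle h(x),x^*\rangle=\mathrm{Re}\langle Tx,x^*\rangle+\mathrm{Re}\langle g(x),x^*\rangle,
\]
and then to bound the two summands using the two hypotheses, which are tailored to exactly these terms.

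The first term is nonpositive by the assumed dissipativity of $T$, i.e. $\mathrm{Re}\langle Tx,x^*\rangle\le 0$ for every $x\in\mathcal{D}(T)$ and every $x^*\in J(x)$. For the second term I would invoke the duality estimate $\mathrm{Re}\langle g(x),x^*\rangle\le\|g(x)\|\,\|x^*\|$, combined with two elementary facts: membership $x^*\in J(x)$ forces $\|x^*\|=\|x\|$ (from $\|x\|^2=\|x^*\|^2$ in the definition of $J$), while $g\in\mathrm{Hol}(B,B_\omega)$ forces $\|g(x)\|\le\omega$. Putting these together yields the clean intermediate bound
\[
\mathrm{Re}\langle h(x),x^*\rangle\le\omega\|x\|.
\]

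It then remains to dominate $\omega\|x\|$ by the right-hand side of \eqref{3b}, and the decisive (and essentially only nonroutine) move is the choice $\varsigma=\omega/2$. Writing $t=\|x\|$, the identity
\[
\omega t^2+\tfrac{\omega}{2}(1-t^2)-\omega t=\tfrac{\omega}{2}(t-1)^2
\]
shows the difference is nonnegative since $\omega>0$, so the desired inequality in fact holds for \emph{all} $t\in(0,1)$, and any $\varepsilon\in(0,1)$ is admissible. I do not expect a genuine obstacle: the argument is a one-line duality bound followed by a perfect-square estimate. The only points requiring a little care are the bookkeeping on the domain $D=\mathcal{D}(T)\cap B$ and the remark that the standing assumptions that $T$ be closed and densely defined play no role in \emph{this} conclusion—only the pointwise dissipativity of $T$ and the containment $g(B)\subset B_\omega$ enter—those structural hypotheses being what later allows the conclusion to be fed into Theorem~\ref{g1tm}.
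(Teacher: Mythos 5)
Your proof is correct, but it follows a genuinely different route from the paper's. The paper never verifies Definition~\ref{disdf} directly: instead it shows that $h=T+g$ is $\omega$-\emph{pseudo-contractive} — using the hypotheses on $T$ to get the contractive linear Abel average $A_\alpha$ of (\ref{i3}), rewriting $(I-\alpha h)(y)=(1-\alpha\omega)x$ as $y=\alpha(A_\alpha\circ g)(y)+(1-\alpha\omega)A_\alpha x$, and invoking Proposition~\ref{1pn} to solve this uniquely and holomorphically in $x$ — and then deduces $\omega$-dissipativity from Theorem~\ref{g1tm}. Your route, splitting $\mathrm{Re}\langle h(x),x^*\rangle$ and combining $\mathrm{Re}\langle Tx,x^*\rangle\le 0$, $\|x^*\|=\|x\|$, $\|g(x)\|<\omega$ with the perfect-square identity $\omega t^2+\tfrac{\omega}{2}(1-t^2)-\omega t=\tfrac{\omega}{2}(t-1)^2\ge 0$, is shorter, entirely elementary, and in fact more general: it needs neither the closedness of $T$, nor the existence of the resolvent $(I-\alpha T)^{-1}$ underlying $\|A_\alpha\|\le 1$, nor the standing hypotheses of Theorem~\ref{g1tm} (reflexivity or weak sequential completeness of $X$), which the paper's proof implicitly imports. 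What the paper's longer route buys is strictly more information: it establishes that the nonlinear Abel averages $\Phi_\alpha$ of $h$ exist and lie in $\mathrm{Hol}(B)$, i.e.\ pseudo-contractivity, which is the property the rest of the paper's machinery runs on and which your argument does not yield (for maps that are not holomorphic on all of $B$, dissipativity does not automatically give it back; Corollary~\ref{Coco} applies only to $h\in\mathrm{Hol}(B,X)$) — this matters because the proposition sits in the Examples section precisely to illustrate Theorem~\ref{g1tm}. One small correction to your closing remark: dense definedness of $T$ is not entirely idle in your argument either, since Definition~\ref{disdf} requires the domain of an $\omega$-dissipative map to be dense in $B$, and density of $D=\mathcal{D}(T)\cap B$ in $B$ is exactly what density of $\mathcal{D}(T)$ in $X$ provides.
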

\begin{proof}
The assumed properties of $T$ imply that its Abel average $A_\alpha$, $\alpha \geq 0$, defined in (\ref{i3}) exists and satisfies (\ref{i7}). For $\alpha>0$, the Abel average of $h$ (\ref{4}) maps $x\in B$ to $y\in X$ given
by the unique solution of the equation
\begin{equation*}
y - \alpha T y - \alpha g (y) = (1- \alpha \omega ) x.
\end{equation*}
This can be rewritten as
\begin{equation*} 
y = \alpha (A_\alpha \circ g) (y) + (1- \alpha \omega) A_\alpha
x.
\end{equation*}
By Proposition \ref{1pn}, the map
\begin{equation*}
y \mapsto \alpha (A_\alpha \circ g) (y) + (1- \alpha \omega)
A_\alpha x - y
\end{equation*}
has a unique null point in $B$. Thus, the map $x \mapsto y$ is in $\mathrm{%
Hol} (B)$. The proof now follows by Theorem \ref{g1tm}.
\end{proof}
A concrete example of $h$ as in Proposition \ref{E1as} is given by the following integro-differential map, which appears in
nonlinear and nonlocal evolution equations of the Fisher-KPP type
\cite{B}. Here $X$ is a complex Hilbert space $L^2 (\mathbb{R})$.
Let $a:\mathbb{R}\times \mathbb{R}
\to (0,+\infty)$ be symmetric and such that the operator
\begin{equation*}
L^2 (\mathbb{R} )\ni x \mapsto \int_{\mathbb{R}} a( \cdot, s ) x(s) ds,
\end{equation*}
maps $L^2 (\mathbb{R} )$ into $L^\infty (\mathbb{R} )$, and
\begin{equation*}
\bigg{\|}\int_{\mathbb{R}} a( \cdot, s ) x(s) ds%
\bigg{\|_{L^\infty
(\mathbb{R} )}} \leq a \| x\|_{L^2 (\mathbb{R} )},
\end{equation*}
for some $a>0$. The integro-differential map $h=T + g$, where
\begin{equation*}
T = \frac{d^2}{dt^2} , \qquad g(x) = bx(\cdot)\bigg{[}1- \int_{\mathbb{R}} a
(\cdot, s) x(s) d s\bigg{]}, \quad \ b>0,
\end{equation*}
is $1$-dissipative for $b< 1 / (1 + a)$.  For $\mathcal{D}(T)$, one can take
the Sobolev space $W^{2,2} (\mathbb{R})$, see, {\sl e.g.},
\cite[Chapters 6 and 7]{LL}.

\subsection{Nonlinear Abel averages}

\label{dep} For a linear operator with Abel average $A_\alpha$, $\alpha\in (0,1)$,
the limit of the sequence $\{A_\alpha^n\}_{n\in \mathbb{N}}$, if it exists, is
one and the same for all $\alpha$, see Assertion \ref{ias2}.  In the nonlinear case, this is no longer true. This is
related to the non-uniqueness of holomorphic retractions for holomorphic
retracts of $B$.
In \cite[Section 3]{BPT} it was proved that any one-dimensional retract
of a bounded convex domain in $\mathbb{C}^n$ with smooth boundary admits a
unique holomorphic retraction whose fibers are affine. However, in general, there
can also be non-affine retractions. Using Abel averages for nonlinear
holomorphic maps, one can construct such non-affine holomorphic retractions, as it is shown
in the following example shows.

Denote $\mathbb{B}^2:=\{z \in \mathbb{C}^2: \|z\|<1\}$,  where $\| \cdot \|$ is the
standard Euclidean norm, and set $h(z) = h(\xi,\eta)=(\lambda \xi+\epsilon \eta^2, 0)$,
$0<\epsilon<1$. Note that $\sigma(h)=\sigma(h^{\prime }(0))=\{0,\lambda\}$.
Set $\omega=\lambda$. Then $\sigma(h)\subset \Omega(\alpha,\lambda)$
for $|1-\alpha\lambda|<1$. A direct computation shows that
\begin{equation}
\label{F1}
\Phi_\alpha(\xi,\eta)=(\xi+\alpha \epsilon (1-\alpha\lambda)\eta^2,
(1-\alpha\lambda)\eta).
\end{equation}
It is easy to check that $\Phi_\alpha$ is a
holomorphic self-map of $\mathbb{B}^2$ for small enough $\alpha$.
Since \eqref{13} is always satisfied in finite dimensional Banach
spaces, Theorem \ref{3tm} applies and $\{\Phi_\alpha^n\}$ converges to a
holomorphic retraction $\phi_\alpha$ of $\mathbb{B}^2$ onto
$\mathrm{Null}(\lambda I-h)=\{(\xi,0): \xi\in \Delta\}$.
By (\ref{F1}), for $n\in \mathbb{N}$, we then get
\begin{equation*}
\Phi_\alpha^n(\xi,\eta)=(\xi+\alpha\epsilon (1-\alpha\lambda) \eta^2 \sum_{j=0}^{n-1}
(1-\alpha\lambda)^{2j}, (1-\alpha\lambda)^n \eta),
\end{equation*}
which yields
\begin{equation*}
\phi_\alpha(\xi,\eta)=\lim_{n\to \infty} \Phi_\alpha^n(\xi,\eta)=(\xi+\epsilon\frac{
1-\alpha\lambda}{2-\alpha\lambda} \eta^2, 0).
\end{equation*}
In particular, $\phi_\alpha$
depends on $\alpha$.

It is also interesting to note that $\phi_\alpha$ can be extended to all $\alpha \in \mathbb{R}$ such that $|1-\alpha\lambda|<1$.
Although  the map
$\Phi_\alpha$ may no longer be a self-map of $\mathbb{B}^2$ for some $\alpha$. For instance, take
$\lambda=1$, $\epsilon=1/2$, and  $\alpha\in (0,2)$. But,
$\lim_{|\eta|\to 1}\|\Phi_\alpha(0,\eta)\|=(1+\frac{\alpha^2}{4})(1-\alpha)^2$ and,
for $\alpha$ close to $2$, this number is bigger than $1$. In this  case, however,
$\phi_\alpha$ is not a self-map if $\mathbb{B}^2$, so that it is not a
holomorphic retraction of $\mathbb{B}^2$ onto $\mathrm{Null}(\lambda I-h)$.

For $\omega=0$, it follows that $\sigma(h)\subset \Omega(\alpha,0)$ for all $%
\alpha\in \mathbb{R}$ such that $|1-\lambda\alpha|>1$. In this case,
\[
\Phi_\alpha(\xi,\eta)=\left(\frac{\xi+\alpha \epsilon \eta^2}{1-\alpha\lambda}, \eta\right).
\]
It is
easy to check that, for any $\alpha$ such that $|1-\lambda\alpha|>1$, the
points $\Phi_\alpha(-\frac{\epsilon}{\lambda} \eta^2, \eta)$ are not in $\mathbb{B}%
^2$ for $|\eta|\to 1$. This is not surprising, because otherwise $%
\{\Phi_\alpha^n\}$ would converge to a holomorphic retraction of $\mathbb{B}%
^2$ onto $\mathrm{Null}(h)=\{(\xi,\eta)\in \mathbb{B}^2: \lambda \xi=-\epsilon
\eta^2\} $, which would imply that $\mathrm{Null}(h)$ be a one-dimensional
holomorphic retract of $\mathbb{B}^2$ (so-called \textit{complex geodesic
of $\mathbb{B}^2$}), while one-dimensional holomorphic retracts of $\mathbb{B%
}^2$ are known to be just the intersection of affine complex lines with $%
\mathbb{B}^2$.

\section{Open question}

Let $D$ be a balanced dense subset of $B$, and let $h$ be a closed
(weakly closed) map on $D$ with values in $X$. Assume that $h$ is
$0$-dissipative and for  some $\alpha _{0}>0$ its Abel average
exists, is holomorphic,
and maps $B$ into itself. Do the Abel averages exist for all positive $%
\alpha $? \vskip.2cm \noindent \textbf{Acknowledgement:} The authors
cordially thank Simeon Reich for valuable suggestions and comments.
Yuri Kozitsky is grateful to ORT Braude College for warm hospitality
extended to him during his work on the paper.

\end{document}